\newcommand{\calO}{\mathcal{O}}
\newcommand{\calC}{\mathcal{C}}
\newcommand{\bbZ}{\mathbb{Z}}
\newcommand{\bbC}{\mathbb{C}}
\newcommand{\bbQ}{\mathbb{Q}}
\newcommand{\Hom}{\mathrm{Hom}}
\newcommand{\Ext}{\mathrm{Ext}}
\newcommand{\End}{\mathrm{End}}
\newcommand{\GL}{\mathrm{GL}}
\newcommand{\bk}{\bar{k}}
\newcommand{\Id}{\mathrm{Id}}
\newcommand{\bbN}{\mathbb{N}}
\newcommand{\rarr}{\rightarrow}
\newcommand{\bbF}{\mathbb{F}}
\newcommand{\bbbF}{\bar{\mathbb{F}}}
\newcommand{\frakQ}{\mathfrak{Q}}
\newcommand{\frakU}{\mathfrak{U}}
\newcommand{\frakp}{\mathfrak{p}}
\newcommand{\Def}{\mathrm{Def}}
\newcommand{\brho}{\bar{\rho}}
\newcommand{\bV}{\bar{V}}
\newcommand{\Ind}{\mathrm{Ind}}
\newcommand{\Tr}{\mathrm{Tr}}
\newcommand{\SL}{\mathrm{SL}}
\newcommand{\bd}{\mathbf{d}}
\newcommand{\bu}{\mathbf{u}}
\newcommand{\hcalC}{\hat{\mathcal{C}}}
\newcommand{\ksi}{\xi}
\begin{document}
\bibliographystyle{alpha}
\newtheorem{thm}{Theorem}[section]
\newtheorem*{thm*}{Theorem}
\newtheorem{proposition}[thm]{Proposition}
\newtheorem{lemma}[thm]{Lemma}
\newtheorem{cor}[thm]{Corollary}
\newtheorem{claim}[thm]{Claim}
\newtheorem{conjecture}[thm]{Conjecture}
\newtheorem{prop}[thm]{Proposition}

\theoremstyle{definition}
\newtheorem{question}[thm]{Question}
\newtheorem{answer}[thm]{Answer}
\newtheorem{remark}[thm]{Remark}
\newtheorem{example}[thm]{Example}
\newtheorem{warning}[thm]{Warning}
\newtheorem{notation}[thm]{Notation}
\newtheorem{construction}[thm]{Construction}
\newtheorem{fact}[thm]{Fact}

\newtheorem{defn}[thm]{Definition}
\newtheorem{obs}[thm]{Observation}
\newtheorem{rmk}[thm]{Remark}
\newtheorem{ex}[thm]{Example}
\newtheorem{quest}[thm]{Question}

\newcommand{\adjunction}[4]{\xymatrix@1{#1{\ } \ar@<-0.3ex>[r]_{ {\scriptstyle #2}} & {\ } #3 \ar@<-0.3ex>[l]_{ {\scriptstyle #4}}}}

\title{On the inverse problem for deformations of finite group representations}
\author{Marcin Lara}

\begin{abstract}
Let $s$ be even and $q=p^s$. We show that the ring $W(\bbF_{q})[\![X]\!]/(X^2-pX)$ is a quotient of the universal deformation ring of a representation of a finite group. This amounts to giving an example of a finite group and its $\bbF_q$-representation that lifts to $W(\bbF_q)$ in two different ways and satisfies certain subtle extra conditions. We achieve this by studying representations of $\SL(2,\bbF_{p^2})$.
\end{abstract}

\maketitle
\tableofcontents
%\pagebreak

\section*{Introduction}
\subsubsection*{Inverse problem, previous work and unanswered questions} Fix a finite field $k$ and consider the category $\hcalC$ of all complete noetherian local rings with residue field $k$. The inverse problem in the theory of deformations of group representations asks which rings in the category $\hcalC$ may occur as universal deformation rings for some representation over $k$ of a profinite group. After some partial results, which have shown that probably more rings can occur than initially expected, the final answer was given by Dorobisz in \cite{dorobisz2} (and independently by Eardley and Manoharmayum in \cite{EaM}). It turns out that all the rings in $\hcalC$ can be obtained as universal deformations rings for some profinite group and its representation. The construction is quite uniform -- for a ring $R$, one considers the profinite group $\SL_n(R)$ (assume $n>3$ for simplicity) and its natural action on $k^n$ given by reducing the coefficients of $R$ modulo its maximal ideal. It turns out that this representation admits $R$ as its universal deformation ring.

This leads to another question: which rings may occur for a representation of a finite group $G$? The above discussion shows that all the finite rings in $\hcalC$ do occur, but it is easy to see that this cannot possibly be the full answer. For example, one can get $\bbZ_p$. In his work, Dorobisz rules out a large number of infinite rings and asks what is true for the remaining ones.
For example, he asks whether rings of the form $W(k)[\![X]\!]/(X^2-p^rX)$ can be obtained in this manner. Here $k$ is a finite field and $W(k)$ is the ring of Witt vectors over $k$.

As a first step, one can start with the following question: which rings of $\hcalC$ occur as quotients of universal deformation rings of representations of finite groups? We denote the class of such rings by $\frakQ$. Similarly, Dorobisz asks whether $W(k)[\![X]\!]/(X^2-p^rX)$ is in $\frakQ$.

\subsubsection*{Main results}
The goal of this article is to give a solution to the problem whether the ring $W(k)[\![X]\!]/(X^2-pX)$ is in $\frakQ$. This question is tightly connected to a problem of finding a finite group and its two non-equivalent representations $\rho_1, \rho_2$ over $W(k)$ whose reductions are equivalent and satisfy certain restrictions. More precisely, we want $\End(\brho) = k$. This is to guarantee the existence of the universal deformation ring. Additionally, we want that for some $g$, $\chi_1(g) - \chi_2(g) \in (p) \setminus (p^2)$, where $\chi_1, \chi_2$ are the corresponding characters. The first attempt would be to "artificially" produce a finite group with an irreducible $\bbF_p$-representation to force the conditions to hold. This is not easy, however, as by \cite{Schmid} this will usually fail for $p$-solvable groups.

Our method is to study carefully two representations of $\SL(2,\bbF_{p^2})$ whose characters are equal on the $p$-regular conjugacy classes. The difficulty is that the reduction of any representation having one of these characters is never a simple representation -- this complicates the proof of existence of the universal deformation ring for the residual representation and prevents us from using the result of Carayol and Serre to conclude that the considered representations not only have characters with values in $\bbZ_p$, but can actually be defined over $\bbZ_p$. On the way to prove the existence of $R_{\brho}$ in our case, we generalize the so-called Ribet's lemma, see Proposition \ref{Ribet'slemma}.
Our main result is the following.
\begin{thm*}
Let $k=\bbF_{p^s}$ for even $s$. Then for any $p$, the ring $W(k)[\![X]\!]/(X^2-pX)$ is in $\frakQ$.
\end{thm*}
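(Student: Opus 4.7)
The plan is to produce a surjection from the universal deformation ring $R_{\brho}$ of a suitable residual representation of a finite group onto $R := W(k)[\![X]\!]/(X^2-pX)$.

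First I would identify $R$ with the fiber product $W(k)\times_k W(k)$: every element of $R$ has a unique expression $a_0+a_1X$ with $a_0,a_1\in W(k)$, and the assignment $a_0+a_1X \mapsto (a_0,\,a_0+pa_1)$ is a ring isomorphism onto the fiber product (the two factors corresponding to $X\mapsto 0$ and $X\mapsto p$). In particular the cotangent space $\frakm_R/\frakm_R^2$ is the two-dimensional $k$-vector space spanned by the classes of $p$ and $X$, and $R/(p) = k[X]/(X^2)$.

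Next I would take $G = \SL(2,\bbF_{p^2})$ and select two non-isomorphic representations $\rho_1,\rho_2 \colon G \to \GL_n(W(k))$ (feasible because $\bbF_{p^2}\subseteq k$, since $s$ is even) whose characters $\chi_1,\chi_2$ agree on every $p$-regular conjugacy class but satisfy $\chi_1(g_0)-\chi_2(g_0)\in (p)\setminus(p^2)$ for some $g_0\in G$. After conjugating one of them in $\GL_n(W(k))$, I arrange that $\rho_1\equiv\rho_2\pmod{p}$ as matrix-valued functions, yielding a common residual representation $\brho$. Since $\brho$ is forced to be reducible, existence of $R_{\brho}$ requires verifying $\End_G(\brho)=k$ by means other than Schur's lemma; this is precisely where the generalized Ribet's lemma of Proposition \ref{Ribet'slemma} enters.

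I would then combine $\rho_1,\rho_2$ into a single deformation of $\brho$ to $R$: via the fiber-product identification, the assignment $g\mapsto(\rho_1(g),\rho_2(g))$ defines a homomorphism $\rho \colon G\to\GL_n(R)$ lifting $\brho$. By the universal property it is classified by a local $W(k)$-algebra map $\varphi \colon R_{\brho}\to R$, and complete Nakayama reduces surjectivity of $\varphi$ to surjectivity of $\frakm_{R_{\brho}}/\frakm_{R_{\brho}}^2 \to \frakm_R/\frakm_R^2$. The image automatically contains the class of $p$, so it remains to hit the class of $X$. For this I would reduce $\varphi$ modulo $p$ to obtain a map $R_{\brho}\to R/(p)=k[X]/(X^2)$; the associated tangent vector is the class in $H^1(G,\mathrm{ad}\,\brho)$ of the cocycle $g\mapsto\bigl((\rho_2(g)-\rho_1(g))/p\bigr)\brho(g)^{-1}$. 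If this class were trivial, $\rho_1$ and $\rho_2$ would be conjugate in $\GL_n(W(k)/p^2)$, so their characters would agree modulo $p^2$, contradicting the choice of $g_0$. Hence the tangent map is surjective, $\varphi$ is surjective, and $R\in\frakQ$.

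The principal obstacle is the simultaneous realization of the second and third steps: producing explicit representations of $\SL(2,\bbF_{p^2})$ whose characters satisfy the delicate congruence condition on $p$-regular classes, and verifying that their necessarily non-simple common reduction $\brho$ still has $\End_G(\brho)=k$ so that $R_{\brho}$ exists at all. The generalized Ribet's lemma is the tool designed to bridge precisely this gap.
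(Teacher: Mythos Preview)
Your overall strategy coincides with the paper's: take $G=\SL(2,\bbF_{p^2})$ and the two irreducible characters $R_\pm(\alpha_0)$, realize them over $W(k)$, arrange a common reduction $\brho$, obtain two deformations and hence a map $R_{\brho}\to W(k)\times_k W(k)\cong R$, and check surjectivity. Your surjectivity argument via the cotangent space and the cohomology class of the first-order difference cocycle is a clean variant of the paper's more hands-on computation (the paper simply evaluates $\Tr(\rho_+(u_+))-\Tr(\rho_-(u_+))=p$ from the character table and observes that the image of a $W(k)$-algebra map already contains the diagonal copy of $W(k)$ in the fiber product); both approaches are correct and essentially equivalent.

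There is, however, a genuine gap in the step ``after conjugating one of them in $\GL_n(W(k))$, I arrange that $\rho_1\equiv\rho_2\pmod p$''. Equality of characters on $p$-regular classes only tells you that the two reductions have the same composition factors, namely $L((p^2-1)/2)$ and $L((p+1)(p-3)/2)$; it does \emph{not} imply that the reductions are isomorphic as $kG$-modules, so there is no a priori reason a conjugation can align them. Ribet's lemma (Proposition~\ref{Ribet'slemma}) lets you choose lattices so that each reduction is a \emph{non-split} extension of one fixed factor by the other, but two non-split extensions with the same factors need not be isomorphic in general. The missing ingredient, which your sketch does not mention, is the computation
\[
\Ext^1_{kG}\bigl(L((p^2-1)/2),\,L((p+1)(p-3)/2)\bigr)=k,
\]
i.e.\ this $\Ext$ group is one-dimensional (Fact~\ref{calcofExt} in the paper, coming from the structure of projective indecomposables for $\SL(2,p^n)$, together with Lemma~\ref{extandbasechange}). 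Only with this in hand does Lemma~\ref{extsareiso} force the two non-split reductions to be isomorphic, after which Lemma~\ref{end=k} yields $\End_{kG}(\brho)=k$. In other words, Ribet's lemma is not, as your last paragraph suggests, principally the device that secures $\End=k$; its role is to produce non-split reductions, and the one-dimensionality of $\Ext^1$ is the separate and indispensable fact that makes those reductions coincide. Without it you have no common $\brho$, and the rest of the argument cannot start.
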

\section{Deformations of representations}

In this section, we recall basic notions of the theory of deformations of group representations. In the following, $k$ will denote a finite field and $p$ will be its characteristic.

\begin{defn}
Let $k$ be a finite field. Let $\hcalC$ denote the category of all complete noetherian local commutative rings with residue field $k$.
Morphisms of $\hcalC$ are the local ring homomorphisms inducing the identity
on $k$. Denote by $\calC$ the full subcategory of artinian rings in $\hcalC$.
\end{defn}

Let us gather some facts on $\hcalC$ and $\calC$. Let $W(k)$ denote the ring of Witt vectors over $k$.
\begin{prop}
\begin{enumerate}
\item The category $\calC$ coincides with the full subcategory of finite rings in
$\hcalC$.
\item \label{cohen-strocture} Every $R \in \hcalC$ is a quotient of a power series ring in
finitely many variables over $W(k)$.
\item \label{cohen-structure-cor} All rings in $\hcalC$ have a natural $W(k)$-algebra structure and homomorphisms in $\hcalC$ coincide with local $W(k)$-algebra homomorphisms.
\end{enumerate}
\end{prop}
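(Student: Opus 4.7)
The three parts break apart nicely and the main work is folklore/Cohen, so the proposal is to cite the standard inputs and indicate how they combine.

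For part (1), the plan is to show directly that a ring in $\hcalC$ is artinian iff it is finite. If $R$ is artinian local noetherian, then $\frakm^N=0$ for some $N$, so $R$ has a finite filtration $R \supseteq \frakm \supseteq \frakm^2 \supseteq \cdots \supseteq \frakm^N=0$ whose successive quotients $\frakm^i/\frakm^{i+1}$ are finitely generated $R/\frakm$-modules, hence finite-dimensional $k$-vector spaces, hence finite sets (since $k$ is finite). So $R$ itself is finite. Conversely, any finite ring is trivially artinian (and also noetherian and $\frakm$-adically complete since $\frakm^n$ stabilizes at $0$).

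For part (2), the plan is to invoke Cohen's structure theorem. Since $k$ is a finite field (hence perfect) of characteristic $p$, Cohen gives a unique local ring homomorphism $W(k) \to R$ inducing the identity on residue fields; equivalently, $W(k)$ serves as the coefficient ring. Now pick generators $x_1, \ldots, x_n$ of $\frakm_R$ (possible because $R$ is noetherian) and consider the $W(k)$-algebra map $\varphi\colon W(k)[X_1,\ldots, X_n]\rightarrow R$ sending $X_i \mapsto x_i$. Because the $x_i$ generate $\frakm_R$, $\varphi$ induces surjections $W(k)[X_1,\ldots, X_n]/(p,X_1,\ldots,X_n)^m \twoheadrightarrow R/\frakm_R^m$ for every $m$. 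Passing to the inverse limit and using that $R$ is $\frakm_R$-adically complete, $\varphi$ extends to a surjection $W(k)[\![X_1,\ldots, X_n]\!] \twoheadrightarrow R$, proving the claim.

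For part (3), the plan is that the $W(k)$-algebra structure on each $R\in \hcalC$ is the one already exhibited in part (2), and its uniqueness follows from the uniqueness clause in Cohen's structure theorem for perfect residue fields: the coefficient ring homomorphism $W(k)\to R$ lifting $\mathrm{id}_k$ exists and is unique. Given any morphism $f\colon R\to R'$ in $\hcalC$, both composites $W(k)\to R\xrightarrow{f}R'$ and $W(k)\to R'$ are local homomorphisms lifting $\mathrm{id}_k$, so by uniqueness they agree; hence $f$ is automatically a $W(k)$-algebra homomorphism.

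No step here is a real obstacle; the only ``hard'' input is Cohen's structure theorem for equicharacteristic and mixed-characteristic complete local noetherian rings with perfect residue field, which is black-boxed from commutative algebra. The rest is bookkeeping with generators and completions.
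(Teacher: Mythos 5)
Your proof is correct and rests on the same key input as the paper, which simply cites Cohen's structure theorem (Matsumura, \S 29) for parts (2) and (3) and leaves part (1) unproved as routine. Your write-up just fills in the standard details --- the finiteness filtration for (1), the surjection from the power series ring for (2), and the uniqueness of the coefficient map for (3) --- all of which are exactly the bookkeeping the paper black-boxes.
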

\begin{proof}
Parts (\ref{cohen-strocture}) and (\ref{cohen-structure-cor}) are consequences of the Cohen’s structure theorem; see \cite[\S 29]{matsumura}.
\end{proof}

Let us move to the definition of universal deformation rings.
Given a ring $R\in \hcalC$, we will denote the functor $\Hom_{\hcalC}(R,-) : \hcalC \rightarrow \mathrm{Sets}$ by $h_R$.

To see the full context of the deformation theory, we are going to temporarily consider representations of profinite groups. Let $G$ be a profinite group and let $\brho : G \rightarrow \GL_n(k)$ be a continuous representation (where $k$ and, consequently, $\GL_n(k)$ are considered with the discrete topology).
\begin{defn}
We define a \emph{lift} of $\brho$ to $R \in \hcalC$ as a continuous group homomorphism $\brho : G \rightarrow \GL_n (R)$ such that $\brho = GL_n(\pi_{\frakp_R}) \circ \rho$. By $GL_n(\pi_{\frakp_R})$ we mean the map $\GL_n(R) \rightarrow \GL_n(k)$ induced by the reduction map $\pi_{\frakp_R}: R \rightarrow k$. We will call two lifts $\rho$, $\rho'$ \emph{strictly equivalent} if there exists $K \in \ker \GL_n(\pi_{\frakp_R})$ such that $\rho'= K\rho K^{-1}$. The set of resulting
equivalence classes will be denoted by $\Def_{\brho} (R)$ and its elements will be called \emph{deformations} of $\brho$ to $R$.
A morphism $f : R \rightarrow R'$ induces a map $\GL_n(f) : \GL_n(R) \rightarrow \GL_n(R')$ that preserves strict equivalence classes, and so gives rise to a map $\Def_{\brho}(R) \rightarrow \Def_{\brho}(R')$. Thus, $\Def_{\brho}$ defines a functor $\hcalC \rightarrow \mathrm{Sets}$ called the \emph{deformation functor} for $\brho$. If the functor $\Def_{\brho}$ is representable by $R_{\brho} \in \hcalC$, we call $R_{\brho}$ the \emph{universal deformation ring} for $\brho$.
\end{defn}
\begin{rmk}
The definition of the deformation functor can be rewritten in the language of topological $k[\![G]\!]$ and $R[\![G]\!]$-modules, where $R[\![G]\!]=\varprojlim_{\textrm{open }N \lhd G} R[G/N]$ (and similarly for $k$).
\end{rmk}

We now proceed to the question of representability. We denote by $\mathrm{CHom}(\cdot,\cdot)$ the set of continuous homomorphisms.
\begin{thm}\label{representabilty}
Let $\brho : G \rightarrow \GL(V)$ be a continuous representation over $k$ of a profinite group. Assume that
\begin{enumerate}
\item $\mathrm{CHom}(\ker \brho , \bbZ / p\bbZ)$ is finite;
\item If $M \in \End_k(V)$ satisfies $\brho(g) M\brho(g)^{-1}=M$, $g \in G$, then there exists $\lambda \in k$ such that $M=\lambda \cdot \Id$.
\end{enumerate}
Then the functor $\Def_{\brho}$ is representable.
\end{thm}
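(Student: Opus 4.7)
The plan is to apply Schlessinger's representability criterion in the form adapted by Mazur to deformations of profinite group representations. Since every $R \in \hcalC$ is the inverse limit of its artinian quotients $R/\frakm_R^n$ and a continuous lift of $\brho$ to $R$ corresponds to a compatible system of lifts to these quotients, the functor $\Def_{\brho}$ on $\hcalC$ is completely determined by its restriction to $\calC$. It therefore suffices to verify Schlessinger's conditions (H1)--(H4) for that restriction, together with finite-dimensionality of the tangent space $\Def_{\brho}(k[\epsilon])$; the resulting pro-representing object will then lie in $\hcalC$ by completeness and noetherianity.

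I would first dispense with the formal conditions (H1) and (H2). Given a diagram $A' \twoheadrightarrow A \leftarrow A''$ in $\calC$ with $A' \to A$ surjective and compatible lifts $\rho'$ and $\rho''$, the idea is to adjust $\rho''$ within its strict equivalence class by an element of $\ker(\GL_n(A'') \to \GL_n(A))$ so that $\rho'$ and $\rho''$ literally agree modulo $A$, and then take the fiber product $\rho' \times_A \rho'' \colon G \to \GL_n(A' \times_A A'')$. This yields surjectivity in (H1); bijectivity in (H2) follows because over $A = k$ the only ambiguity is inner conjugation by an element of $\GL_n(k)$, which acts trivially on the residual class itself.

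The hard part is showing that the tangent space is finite-dimensional, and this is where hypothesis (1) enters. There is a standard identification
\[
\Def_{\brho}(k[\epsilon]) \;\cong\; H^1_{\mathrm{cts}}(G, \End_k(V))
\]
with $G$ acting on $\End_k(V)$ by conjugation through $\brho$. Let $N = \ker \brho$; this is open of finite index in $G$ and acts trivially on $\End_k(V)$, so the inflation--restriction sequence gives
\[
0 \to H^1(G/N, \End_k(V)) \to H^1_{\mathrm{cts}}(G, \End_k(V)) \to \mathrm{CHom}(N, \End_k(V))^{G/N}.
\]
The left term is finite since both $G/N$ and $\End_k(V)$ are finite, while the right term is bounded by $\mathrm{CHom}(N, \End_k(V))$. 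As $\End_k(V)$ is a finite-dimensional $\bbF_p$-vector space, the latter is, as an abstract abelian group, a finite power of $\mathrm{CHom}(N, \bbZ/p\bbZ)$, hence finite by hypothesis (1). This gives (H3).

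Finally, (H4) is where hypothesis (2) is essential. For a small surjection $A' \twoheadrightarrow A$ with square-zero kernel $I$, bijectivity in (H4) reduces to showing that every element of $\ker(\GL_n(A') \to \GL_n(A))$ that centralizes a chosen lift $\rho'$ to $A'$ already comes, modulo scalars, from a centralizer of its reduction. Such an element has the form $\Id + M$ with $M \in M_n(I)$ satisfying $(\Id+M)\rho'(g)(\Id+M)^{-1} = \rho'(g)$; reducing modulo $\frakm_{A'}\cdot I$ forces the leading part of $M$ to centralize $\brho$ in $\End_k(V)$, and hypothesis (2) forces this leading part to be a scalar, which is inner and lifts canonically. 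I expect the tangent space estimate to be the main conceptual point, as it is the only step where the global structure of $G$ intervenes, via (1); the other verifications are essentially formal manipulations with strict equivalence.
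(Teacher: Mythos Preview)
The paper does not prove this theorem at all: its proof is the single line ``See \cite[Prop.~7.1]{dSL}.'' So there is no argument in the paper to compare against. Your proposal instead reproduces Mazur's original verification of Schlessinger's criteria, which is the other standard route to this result; de~Smit--Lenstra avoid Schlessinger and give a direct, more explicit construction of $R_{\brho}$ as a quotient of a power series ring. Either approach is acceptable here.

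Your outline is correct in substance. The identification of the tangent space with $H^1_{\mathrm{cts}}(G,\End_k(V))$ and the inflation--restriction bound using hypothesis~(1) are exactly how (H3) is handled. One small point on (H4): the statement ``every element of $\ker(\GL_n(A') \to \GL_n(A))$ that centralizes a chosen lift $\rho'$ already comes, modulo scalars, from a centralizer of its reduction'' is not quite the right formulation, and the computation you give afterwards is only the \emph{inductive step}. What is actually needed for (H4) is that for \emph{every} $A \in \calC$ and every lift $\rho_0$ to $A$, the centralizer of $\rho_0(G)$ in $\GL_n(A)$ is exactly $A^\ast \cdot I_n$. This follows from hypothesis~(2) by induction on the length of $A$, using precisely the small-extension calculation you wrote (if $1+M$ with $M \in M_n(I)$ centralizes $\rho'$, then since $\frakm_{A'} I = 0$ one gets $M\brho(g) = \brho(g)M$ over $k$, hence $M$ is scalar). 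Once that lemma is in hand, the injectivity in (H4) goes through: given two lifts to $A' \times_A A'$ with strictly equivalent projections, one arranges the first projections to coincide, and then the conjugating matrix on the second factor reduces to a scalar over $A$, so can be corrected to lie in $\ker(\GL_n(A') \to \GL_n(A))$ and hence lifts to $\GL_n(A' \times_A A')$. You have the right pieces; just make the induction explicit.
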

\begin{proof}
See \cite[Prop. 7.1]{dSL}.
\end{proof}

Observe that the first condition is automatically satisfied for finite groups $G$, which will be the case of our interest.

\section{The inverse problems}
Fix a finite field $k$ and consider the associated category $\hcalC$. The inverse problem is  the following question:
\begin{center}
\emph{Which rings of $\hcalC$ occur as universal deformation rings for some profinite group and its representation over $k$?}
\end{center}
This was an open question for many years. For example, for some time it was conjectured that only complete intersections can occur as universal deformation rings, but  counterexamples were found. The final answer to the question is due to Dorobisz in his PhD thesis, see \cite{dorobisz} and \cite{dorobisz2} (and independently due to Eardley and Manoharmayum, see \cite{EaM}), namely
\begin{thm}
All the rings in $\hcalC$ can occur as universal deformation rings.

More precisely, let $R$ be a complete noetherian local ring with a finite
residue field $k$, $n \geq 2$ and consider the natural representation $\brho$ of $\SL_n(R)$
in $\GL_n(k)$. Then $R$ is the universal deformation ring of $\brho$ if and only if
$(n,k) \notin \{(2,\bbF_2),(2,\bbF_3),(2,\bbF_5), (3,\bbF_2)\}$.
\end{thm}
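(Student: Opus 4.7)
The plan is: first verify the representability conditions from Theorem \ref{representabilty}, then identify $R_{\brho}$ with $R$ by constructing mutually inverse morphisms in $\hcalC$.

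For representability, the kernel of $\brho$ is the first congruence subgroup $\ker(\SL_n(R) \twoheadrightarrow \SL_n(k))$, a pro-$p$ group whose maximal continuous quotient of exponent $p$ is controlled by the finite $k$-vector space $\frakm_R/(\frakm_R^2 + pR)$; hence $\mathrm{CHom}(\ker \brho, \bbZ/p\bbZ)$ is finite, establishing condition (1). Condition (2) is the absolute irreducibility of the natural representation of $\SL_n(k)$ on $k^n$, which is known to hold exactly outside the listed exceptional pairs (in which $\SL_n(k)$ is either non-perfect, too small, or the natural module has nontrivial endomorphisms).

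The inclusion $\iota : \SL_n(R) \hookrightarrow \GL_n(R)$ is a tautological lift of $\brho$, so by the universal property it determines a surjection $R_{\brho} \twoheadrightarrow R$ in $\hcalC$. To invert this, given any lift $\rho : \SL_n(R) \to \GL_n(S)$ with $S \in \hcalC$, I would produce a local homomorphism $f : R \to S$ with $\rho$ strictly equivalent to $\GL_n(f) \circ \iota$. The key tool is the Chevalley/Steinberg topological presentation of $\SL_n(R)$ by elementary matrices $e_{ij}(r)$ ($i \neq j$, $r \in R$) subject to
\[
 e_{ij}(r)\, e_{ij}(s) = e_{ij}(r+s), \qquad [e_{ij}(r), e_{jk}(s)] = e_{ik}(rs) \ (i,j,k \text{ distinct}),
\]
together with torus relations. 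After adjusting $\rho$ by a conjugation in $\ker(\GL_n(S) \to \GL_n(k))$, one arranges $\rho(e_{ij}(r)) = e_{ij}(f(r))$ for a single continuous set-map $f : R \to S$ lifting $\mathrm{id}_k$; additivity of $f$ then follows from the first relation, multiplicativity from the commutator relation, and $f$ becomes the desired local ring homomorphism.

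The main obstacle is the rank-one case $n = 2$, where the Chevalley commutator formula is unavailable and the multiplication on $R$ must instead be recovered from the conjugation action of the diagonal torus on the two root subgroups. This requires a torus large enough to distinguish ring elements, which is precisely what fails for $(2, \bbF_2)$, $(2, \bbF_3)$, $(2, \bbF_5)$; the case $(3, \bbF_2)$ is excluded for independent reasons tied to the exceptional Schur multiplier of $\SL_3(\bbF_2)$ and the presence of extra deformations not coming from $R$. In all non-exceptional cases, the construction produces an inverse $R \to R_{\brho}$, and the strict equivalence between $\rho$ and $\GL_n(f) \circ \iota$ — which follows from their agreement on the topological generators $e_{ij}(r)$ — finishes the proof that $R_{\brho} \cong R$.
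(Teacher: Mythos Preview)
The paper does not prove this theorem: its proof is simply ``See \cite[Theorem 1.1]{dorobisz2}.'' You have instead sketched an argument along the lines of Dorobisz's actual proof. The overall shape --- representability, the tautological lift giving a map $R_{\brho}\to R$, and recovering a ring homomorphism from any lift via the Steinberg relations among elementary matrices --- is indeed the right strategy.

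There is, however, a genuine error in your treatment of the exceptional cases. You claim that condition~(2) of Theorem~\ref{representabilty} (absolute irreducibility of the natural module) ``is known to hold exactly outside the listed exceptional pairs.'' This is false: for each of $(2,\bbF_2)$, $(2,\bbF_3)$, $(2,\bbF_5)$, $(3,\bbF_2)$ the natural representation of $\SL_n(k)$ on $k^n$ \emph{is} absolutely irreducible, so $R_{\brho}$ exists in every case. The exceptional pairs are excluded not because the deformation functor fails to be representable, but because in those cases $R_{\brho}$ is genuinely larger than $R$: there are lifts of $\brho$ to Artinian rings that do not arise from a ring homomorphism out of $R$. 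Your explanation that ``the torus is not large enough to distinguish ring elements'' gestures at part of the mechanism for $n=2$, but it does not account for why $(2,\bbF_5)$ fails while $(2,\bbF_7)$ does not, nor does it apply to $(3,\bbF_2)$, where the commutator relations \emph{are} available yet the result still fails.

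A second gap is the step ``after adjusting $\rho$ by a conjugation \ldots\ one arranges $\rho(e_{ij}(r))=e_{ij}(f(r))$ for a single continuous set-map $f$.'' This is the crux of the entire argument and cannot be asserted; one must show that $\rho$ sends each root subgroup into the corresponding root subgroup of $\GL_n(S)$ (up to strict equivalence), and that the resulting maps $f_{ij}:R\to S$ on the various root groups coincide. This is where the bulk of the work in \cite{dorobisz2} and \cite{EaM} lies, and where the delicate exclusions actually enter.
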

\begin{proof}
See \cite[Theorem 1.1]{dorobisz2}.
\end{proof}
Recall that in the statement of the inverse problem we allow all profinite groups and their representations. Dorobisz' proof indeed uses this fact, as the group $\SL_n(R)$ is not finite for an infinite ring $R$ and $n \geq 2$.
This leaves open the following question:
\begin{center}
\emph{Which rings of $\hcalC$ occur as universal deformation rings for some \textbf{finite} group and its representation over $k$?}
\end{center}
Let us also introduce a convenient notation:
\begin{notation}
\begin{enumerate}
\item Denote by $\frakU$ the class of rings in $\hcalC$ that occur as universal deformation rings for a representation of a finite group.
\item Denote by $\frakQ$ the class of rings in $\hcalC$ that occur as a quotient of a ring in $\frakU$.
\end{enumerate}
\end{notation}
It turns out that the answer to this question cannot be the same as before, i.e. there are rings in $\hcalC$ that are not in $\frakU$. The reason is simple -- there are only countably many finite groups and their representations (up to isomorphism) over $k$ while it can be shown that the class $\mathrm{Ob}(\hcalC)$ is uncountable (\cite[Prop. 6.3]{dorobisz}). On the other hand, by the theorem of Dorobisz, we see that all finite rings of $\hcalC$ lie in $\frakU$. But there are more rings in $\frakU$. For example $\bbZ_p \in \frakU$, when $k = \bbF_p$ (this can be obtained by taking any finite group of order prime to $p$ and its irreducible representation over $\bbF_p$, see \cite[Lemma 3.2]{dorobisz2}). Many rings are ruled out from being in $\frakU$ by the following result.
For a ring $R$, denote by $T_{p^\infty}(R)$ the $p$-torsion subgroup of $(R,+)$, i.e. $T_{p^\infty}(R)=\bigcup_{i=1}^\infty \{r \in R |p^ir=0\}$.

\begin{thm} (\cite[Theorem 6.30.]{dorobisz})
Let $R \in \hcalC$ be of characteristic zero and assume $R \in \frakU$. Then
$R/T_{p^\infty}(R)$ is reduced and of Krull dimension $1$.
\end{thm}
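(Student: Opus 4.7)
The plan is to derive both conclusions from the single structural fact that $R$ is a finite $W(k)$-algebra. The Krull dimension statement then follows from the characteristic-zero hypothesis, and reducedness follows from Maschke's theorem applied after inverting $p$.

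\textbf{Finiteness of $R$ over $W(k)$.} Let $\rho^{\mathrm{univ}}\colon G \to \GL_n(R)$ be the universal lift of $\brho$, and set $N := |G|$. Since $g^N = e$, each matrix $\rho^{\mathrm{univ}}(g)$ satisfies $X^N - 1$, so its characteristic polynomial divides $X^N - 1 \in W(k)[X]$; in particular, its trace $\chi(g) \in R$ is a sum of $N$-th roots of unity and lies in the image of $W(k)[\zeta_N]$. The $W(k)$-subalgebra $\mathcal{O} \subseteq R$ generated by $\{\chi(g) : g \in G\}$ is therefore finite over $W(k)$. Invoking the Schur condition (item (2) of Theorem~\ref{representabilty})---which lifts to $\End_{R[G]}(R^n) = R \cdot \Id$ by Nakayama---together with Chenevier's determinant/pseudo-character formalism, one realizes $\rho^{\mathrm{univ}}$ as (strictly equivalent to) a lift of $\brho$ defined over the completion $\hat{\mathcal{O}}$. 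The universal property then produces a section $R \twoheadrightarrow \hat{\mathcal{O}}$ of the inclusion $\hat{\mathcal{O}} \hookrightarrow R$, forcing $R \cong \hat{\mathcal{O}}$.

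\textbf{Krull dimension and reducedness.} From finiteness, $R/T_{p^\infty}(R)$ is a torsion-free finite $W(k)$-module; being nonzero in characteristic zero, it is free of positive rank as a $W(k)$-module and therefore of Krull dimension $1$. For reducedness, observe that the kernel of $R \to R[1/p] := R \otimes_{W(k)} \mathrm{Frac}(W(k))$ is exactly $T_{p^\infty}(R)$, so $R/T_{p^\infty}(R) \hookrightarrow B := R[1/p]$; it thus suffices to show that $B$ is reduced. The universal representation extends to $\rho\colon G \to \GL_n(B)$, making $V := B^n$ a finite-dimensional $K[G]$-module, where $K := \mathrm{Frac}(W(k))$. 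Maschke's theorem implies that $K[G]$ is semisimple, so $\End_{K[G]}(V) \cong \prod_i M_{m_i}(D_i)$ is a semisimple $K$-algebra. The lifted Schur condition identifies $B$ with $\End_{B[G]}(V) \subseteq \End_{K[G]}(V)$; this commutative subring lands in the centre $\prod_i Z(D_i)$, which is a product of fields, so $B$ is reduced.

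\textbf{Main obstacle.} The delicate step is the finiteness of $R$ over $W(k)$. Trace-determines-the-deformation-ring statements are classical when $\brho$ is absolutely irreducible, but Theorem~\ref{representabilty} requires only the strictly weaker Schur hypothesis $\End_G(\bV) = k$, which admits non-semisimple residual representations---precisely the setting relevant to the main theorem of this paper. Extending the pseudo-deformation machinery to this generality, or giving an ad hoc finite-group argument exploiting $\rho^{\mathrm{univ}}(g)^N = I$ for every $g$, is the main technical hurdle; once finiteness is in hand, the remaining arguments are essentially formal.
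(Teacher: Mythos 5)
The paper itself offers no proof of this statement---it is quoted verbatim from Dorobisz's thesis---so your attempt has to stand entirely on its own, and it does not. The fatal gap is the step you flag yourself but underestimate: finiteness of $R$ as a $W(k)$-module. This is not a deferred technicality that the rest of the argument can wait for; it is a claim \emph{strictly stronger} than the theorem, and as far as is known it is open. The theorem must accommodate characteristic-zero rings whose $p$-power torsion is an infinitely generated $W(k)$-module: the ring $W(k)[\![X]\!]/(p^rX)$ has characteristic zero, satisfies the conclusion (its quotient by $T_{p^\infty}$ is $W(k)$, reduced of dimension $1$), is \emph{not} a finite $W(k)$-module, and whether it lies in $\frakU$ is precisely item (3) of Dorobisz's Question reproduced in this paper. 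A proof that passes through ``every characteristic-zero $R\in\frakU$ is finite over $W(k)$'' would settle that open question negatively, so no such proof can be expected. The reason your sketch of finiteness cannot be completed is structural: trace/pseudo-character arguments (Carayol--Serre, Chenevier) show that a deformation ring is generated by traces only under residual absolute irreducibility or multiplicity-freeness hypotheses. Under the bare Schur condition $\End_{kG}(\bV)=k$---which allows non-semisimple residual representations, exactly the situation this paper is built around, and exactly why the paper itself remarks that Theorem~\ref{carayol} is unavailable---the completed trace subalgebra $\hat{\mathcal{O}}$ can be a proper subring of $R_{\brho}$, and the claimed isomorphism $R\cong\hat{\mathcal{O}}$ is what fails. (Your assertion that each $\chi(g)$ lies in the image of $W(k)[\zeta_N]$ is also unjustified when $R$ has $p$-torsion or nilpotents.)

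There is a second genuine error in the part you call ``essentially formal.'' From $B\cong\End_{B[G]}(B^n)$ sitting inside the semisimple algebra $\End_{K[G]}(B^n)$ you conclude that $B$, being commutative, ``lands in the centre'' and is therefore reduced. But a commutative subalgebra of a semisimple algebra need not be central nor reduced, even if it is self-centralizing: $K[\epsilon]/(\epsilon^2)$ embeds into $M_2(K)$ by sending $a+b\epsilon$ to $aI_2+bE_{12}$, where $E_{12}$ is the matrix with a single $1$ in the upper-right corner, and the image is its own centralizer. So nothing in your argument excludes nilpotents in $B$. The standard correct route---which also yields the dimension bound without any global finiteness of $R$---is rigidity in characteristic zero: since $|G|$ is invertible in $K$, one has $H^1(G,\mathrm{ad}\,\rho)=0$ for every characteristic-zero lift $\rho$, so the local rings of $R[1/p]$ at its maximal ideals have trivial tangent spaces and are fields; hence $R[1/p]$ is reduced of Krull dimension $0$. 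Since the kernel of $R\to R[1/p]$ is exactly $T_{p^\infty}(R)$, this gives $R/T_{p^\infty}(R)$ reduced, and Krull dimension $1$ then follows because $p$ is a nonzerodivisor lying in the maximal ideal of $R/T_{p^\infty}(R)$ while inverting $p$ produces a zero-dimensional ring.
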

Despite this result, it is not clear which among the rings of characteristic $0$ and such that
$R/T_{p^\infty}(R)$ is reduced and of Krull dimension $1$ occur in $\frakU$. In his thesis, Dorobisz asks the following question (\cite[Question 6.39]{dorobisz}).
\begin{quest}
Which of the following rings are in $\frakU$ (are in $\frakQ$)?
\begin{enumerate}
\item $W(k)[\sqrt[r]{p}]$, $r>1$
\item $W(k)[\![X]\!]/(X^2-p^rX)$, $r\geq 1$
\item $W(k)[\![X]\!]/(p^rX)$, $r \geq 1$
\end{enumerate}
\end{quest}

The main result of this article is to prove the following
\begin{center}
\emph{Let $p$ be a prime. Then the ring $W(\bbF_{p^s})[\![X]\!](X^2-pX)$ is in $\frakQ$ for every even $s$.}
\end{center}
Actually, the case $p=2$ is quite easy and we can skip the assumption that $s$ is even in this case. Most of our work is devoted to solving the case when $p$ is odd.

\section{$\SL(2,q)$ and its representation theory}
\subsection{Representation theory of $\SL(2,q)$ in characteristic $0$}
\begin{defn}
In this and the following sections, $q$ will denote a power of an odd prime number $p$, unless stated otherwise. By $\SL(2,q)$ or $\SL_2(\bbF_q)$ we mean the (finite) group of all the $2 \times 2$ matrices over the finite field $\bbF_q$ that have determinant equal $1$. By $\GL(2,q)$ or $\GL_2(\bbF_q)$ we mean the (finite) group of all invertible $2 \times 2$ matrices over $\bbF_q$.
\end{defn}
The group $\SL(2,q)$ will play the main role. We start by recalling some basic proprieties of this group and describing some of its representations. More details can be found in \cite[Ch. 1,3,5]{bonna}.
\begin{defn} We define the following subgroups of $\SL(2,q)$:
\begin{gather}
T=\Bigg\{\left( \begin{array}{cc}
a & \\
  & a^{-1}
\end{array}\right)\Biggm\vert a \in \bbF_q^*\Bigg\}, \text{  }
U= \Bigg\{\left( \begin{array}{cc}
1 & a \\
  & 1  \end{array}\right)\Biggm\vert a \in \bbF_q\Bigg\}, \text{  }
B=\Bigg\{\left( \begin{array}{cc}
a & b \\
  & a^{-1}  \end{array}\right)\Biggm\vert a \in \bbF_q^*, b \in \bbF_q\Bigg\}.
\end{gather}

\end{defn}
For a natural number $m$, we will denote by $\mu_m$ the group of $m$-th roots of unity in $\bbbF_p$.
We have the obvious isomorphisms:
\begin{gather*}
\bd : \mu_{q-1}=F_q^* \simeq T, \text{  }
a \mapsto \left( \begin{array}{cc}
a & \\
  & a^{-1}
\end{array}\right) \text{  and  }  \bu : (\bbF_q,+) \simeq U, \text{   } a \mapsto \left( \begin{array}{cc}
1 & a\\
  & 1
\end{array}\right)
\end{gather*}

Let us define another subgroup of $\SL(2,q)$. Consider $\bbF_{q^2}$ as a two-dimensional vector space over $\bbF_q$ and fix some basis of it to get an isomorphism $$\bd' : \GL_{\bbF_q}(\bbF_{q^2}) \stackrel{\sim}{\rightarrow} \GL_2(\bbF_q).$$ Elements of $\bbF_{q^2}^*=\mu_{q^2-1}$ act $\bbF_q$-linearly on the space $\bbF_{q^2}$. In this way we get $\mu_{q+1}\subset \GL_2(\bbF_q)$ and it can be checked that the image lands in $\SL_2(\bbF_q)$. So we get an isomorphism of $\mu_{q+1}$ with a subgroup of $G$.

Let us gather some basic facts about SL$(2,q)$. For convenience, in this subsection we will be sometimes denoting $G=SL(2,q)$ and $I_2=\left( \begin{array}{cc}
1 &  \\
  & 1   \end{array}\right)$.
\begin{fact}
\begin{enumerate}
\item $|SL(2,q)|=q(q-1)(q+1)$.
\item $B=U \rtimes T$, where $\rtimes$ denotes the semidirect product.
\item $Z(G)=\{I_2,-I_2\}$.
\item Denote $s=\left( \begin{array}{cc}
 & -1 \\
1 &   \end{array}\right)$. Then $G=B \sqcup BsB$ (\emph{Bruhat decomposition}), where $\sqcup$ denotes here and later the disjoint union.
\end{enumerate}
\end{fact}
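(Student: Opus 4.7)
My plan is to handle each of the four parts by direct matrix computation, deriving the Bruhat decomposition in (4) from the standard transitive action of $\SL(2,q)$ on $\PP^1(\bbF_q)$. Everything here is classical, so the work is essentially bookkeeping, with (4) being the only part that uses more than linear algebra over $\bbF_q$.

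For (1), I would count ordered bases of $\bbF_q^2$ to obtain $|\GL(2,q)|=(q^2-1)(q^2-q)$, and then use that $\det:\GL(2,q)\to\bbF_q^*$ is a surjective homomorphism with kernel $\SL(2,q)$, which yields $|\SL(2,q)|=|\GL(2,q)|/(q-1)=q(q-1)(q+1)$. For (2), the identity
\[
\begin{pmatrix} a & b \\ 0 & a^{-1} \end{pmatrix} = \begin{pmatrix} 1 & ab \\ 0 & 1 \end{pmatrix}\begin{pmatrix} a & 0 \\ 0 & a^{-1} \end{pmatrix}
\]
exhibits $B$ as the set-theoretic product $UT$ with unique factorisation; $U\cap T=\{I_2\}$ is immediate, and the conjugation formula $\bd(a)\,\bu(x)\,\bd(a)^{-1}=\bu(a^2 x)$ shows that $U$ is normal in $B$, which gives the semidirect product structure.

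For (3), if $z=\begin{pmatrix}\alpha&\beta\\\gamma&\delta\end{pmatrix}\in Z(G)$, then commuting with $\bu(1)$ forces $\gamma=0$ and $\alpha=\delta$, and commuting with $s$ then forces $\beta=0$; together with $\det z=1$ this leaves exactly $z=\pm I_2$, which are distinct since $p$ is odd.

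For (4), I would use the action of $G$ on $\PP^1(\bbF_q)$ by $\begin{pmatrix}a&b\\c&d\end{pmatrix}\cdot[x:y]=[ax+by:cx+dy]$; the subgroup $B$ is exactly the stabiliser of $\infty=[1:0]$, so $G/B\simeq\PP^1(\bbF_q)$ and the double cosets $B\backslash G/B$ are in bijection with the $B$-orbits on $\PP^1(\bbF_q)$. A direct computation shows that $B$ acts on the affine chart $\bbF_q\subset\PP^1(\bbF_q)$ by $x\mapsto a^2 x+ab$, and this action is transitive (already the translations in $U$ suffice). Hence there are exactly two $B$-orbits, $\{\infty\}$ and $\bbF_q$, and since $s\cdot\infty=0\in\bbF_q$, this yields $G=B\sqcup BsB$. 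The only mild ``obstacle'' in the whole argument is recognising $B$ as the stabiliser of $\infty$ and doing the orbit count on $\PP^1(\bbF_q)$; neither step is deep, which is why these statements are usually just quoted as in the present fact.
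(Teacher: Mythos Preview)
Your proposal is correct in every part; the matrix computations in (1)--(3) are accurate, and your argument for (4) via the transitive action of $G$ on $\PP^1(\bbF_q)$ with $B=\mathrm{Stab}_G(\infty)$ is a clean and standard route to the Bruhat decomposition.

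The paper itself does not actually prove this fact: it leaves (1)--(3) as evident and for (4) simply cites \cite[\S 1.1.1]{bonna}. So rather than taking a different approach, you have supplied what the paper omits. Your use of the $\PP^1$-action is arguably the most conceptual way to see (4), since it immediately identifies $B\backslash G/B$ with the $B$-orbits on $G/B\cong\PP^1(\bbF_q)$; the alternative (and what one often finds in textbooks such as Bonnaf\'e's) is a bare-hands argument showing that any $g\in G$ with nonzero lower-left entry can be written as $b_1 s b_2$ by row and column operations. Both arguments are elementary and of comparable length; yours has the advantage of generalising transparently to the Bruhat decomposition for $\SL_n$ and other reductive groups.
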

\begin{proof}
For the Bruhat decomposition see \cite[§ 1.1.1]{bonna}.
\end{proof}
Let us describe the conjugacy classes of $G$. Denote by $\equiv$ the relation on $\bbF^*$ defined by $x \equiv y$ if $y \in \{x,x^{-1}\}$ and fix an element $c_0 \in \bbF_q$ which is not a square
in $\bbF_q$ (which is possible as $q$ is odd). Set
\begin{displaymath}
u_+ = \left( \begin{array}{cc}
1 & 1\\
  & 1
\end{array}\right) \textrm{ and } u_-=\left( \begin{array}{cc}
1 & c_0\\
  & 1
\end{array}\right)
\end{displaymath}
\begin{fact}
The group G consists of $q+4$ conjugacy classes. Each conjugacy class is represented by exactly one element of the following set.
\begin{displaymath}
\{I_2,-I_2\}\sqcup \{u_+,u_-,-u_+,-u_-\} \sqcup \{\bd(a)|a \in (\mu_{q-1}\setminus \{1,-1\})/\equiv \} \sqcup \{\bd'(\ksi)|\ksi \in (\mu_{q+1}\setminus \{1,-1\})/\equiv \}
\end{displaymath}
\end{fact}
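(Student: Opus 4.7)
The plan is to classify the conjugacy classes through the Jordan decomposition over $\overline{\bbF}_q$, and then refine the $\GL_2(\bbF_q)$-conjugacy classes into $\SL_2(\bbF_q)$-conjugacy classes. Any $g \in G$ has characteristic polynomial $X^2 - \tr(g) X + 1$, so its eigenvalues come in pairs $\{\lambda, \lambda^{-1}\}$ in $\overline{\bbF}_q^*$. I would separate into three cases according to where these eigenvalues live.

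First I would handle the case $\lambda = \lambda^{-1}$, i.e.\ $\lambda = \pm 1$. If $g$ is semisimple, then $g = \pm I_2$. Otherwise $g = \pm u$ for a non-trivial unipotent $u$, and over $\overline{\bbF}_q$ every such $u$ is conjugate to $u_+$. To get the $\SL_2$-refinement I would use the standard observation that the set of $\SL_2$-classes inside a single $\GL_2$-class containing $x$ is in bijection with $\bbF_q^* / \det\bigl(C_{\GL_2}(x)\bigr)$. For $x = u_+$, the centralizer in $\GL_2$ is $Z \cdot U$, whose image under $\det$ is $(\bbF_q^*)^2$, so the $\GL_2$-class splits into exactly two $\SL_2$-classes, with representatives $u_+$ and $u_-$ (the latter being non-conjugate to the former precisely because $c_0$ is a non-square). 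The same argument works verbatim for $-u_+, -u_-$.

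Next I would treat the case $\lambda \neq \lambda^{-1}$ with $\lambda \in \bbF_q^*$. Then $g$ is $\GL_2(\bbF_q)$-diagonalizable and $\GL_2$-conjugate to $\bd(\lambda)$. Here $C_{\GL_2}(\bd(\lambda))$ is the full diagonal torus, which has determinant map surjective onto $\bbF_q^*$, so the $\GL_2$-class and the $\SL_2$-class coincide. The matrix $s \in \SL(2,q)$ conjugates $\bd(a)$ to $\bd(a^{-1})$, so we must take representatives modulo $\equiv$; this yields $(q-3)/2$ classes. For the remaining case $\lambda \in \bbF_{q^2} \setminus \bbF_q$, since $\lambda$ and $\lambda^{-1}$ are both roots of the same polynomial over $\bbF_q$ we get $\lambda^{-1} = \lambda^q$, i.e.\ $\lambda^{q+1} = 1$, so $\lambda \in \mu_{q+1} \setminus \{1,-1\}$. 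Via the embedding $\bd'$, the element $g$ is $\GL_2$-conjugate to $\bd'(\lambda)$. The centralizer of $\bd'(\lambda)$ in $\GL_2(\bbF_q)$ is the non-split torus $\bbF_{q^2}^*$, and $\det$ restricted to it is the norm $N_{\bbF_{q^2}/\bbF_q}$, which is surjective; so again the $\GL_2$-class coincides with the $\SL_2$-class. Frobenius on $\bbF_{q^2}$ sends $\lambda \mapsto \lambda^q = \lambda^{-1}$ and is realized by conjugation by an element of $\SL_2(\bbF_q)$ (as in the split case, by a suitable analog of $s$), so $\bd'(\xi)$ and $\bd'(\xi^{-1})$ are conjugate, yielding $(q-1)/2$ further classes.

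Adding up: $2 + 4 + \frac{q-3}{2} + \frac{q-1}{2} = q+4$, matching the claim. Distinctness across the four families follows from looking at eigenvalues: $\pm I_2$ are the only scalar elements; the unipotent and negative-unipotent classes are characterized by having trace $\pm 2$ but not being scalar; the $\bd(a)$ have distinct eigenvalues in $\bbF_q^*$; and the $\bd'(\xi)$ have eigenvalues outside $\bbF_q$. The main obstacle I expect is the bookkeeping in the unipotent case, specifically the computation $\det(C_{\GL_2}(u_+)) = (\bbF_q^*)^2$ which is exactly what forces two $\SL_2$-classes rather than one, and symmetrically checking that the norm map $N_{\bbF_{q^2}/\bbF_q}$ is surjective so that the non-split torus does \emph{not} split further. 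These two sign-like dichotomies, together with the involution $\equiv$ coming from the Weyl element $s$, are the only subtle points; everything else is a direct computation.
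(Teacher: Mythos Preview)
Your argument is correct and is the standard route to the conjugacy-class table of $\SL(2,q)$. The paper itself gives no proof here: it simply cites \cite[Theorem~1.3.3]{bonna}. So there is nothing to compare at the level of strategy; you have supplied a self-contained proof where the paper defers to Bonnaf\'e.

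One small point worth tightening: in the non-split case you assert that the Frobenius $\xi \mapsto \xi^q = \xi^{-1}$ is realized by conjugation by an element of $\SL_2(\bbF_q)$, ``as in the split case, by a suitable analog of $s$''. The $\bbF_q$-linear Frobenius on $\bbF_{q^2}$ has determinant $-1$ (in a basis $\{1,\omega\}$ with $\omega^q = t - \omega$ its matrix is upper triangular with diagonal $(1,-1)$), so it does \emph{not} lie in $\SL_2(\bbF_q)$. What saves you is exactly the surjectivity of the norm that you already invoked: pick $h \in \bbF_{q^2}^*$ with $N_{\bbF_{q^2}/\bbF_q}(h) = -1$ and use $\sigma \cdot \bd'(h)$, which now has determinant $1$ and still conjugates $\bd'(\xi)$ to $\bd'(\xi^{-1})$. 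With that adjustment the sketch is complete.
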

\begin{proof}
See \cite[Theorem 1.3.3]{bonna}.
\end{proof}

We now proceed to the description of two important characters of $G$. Fix some generator $\epsilon$ of the character group $\Hom(T,\bbC^*)$ of $T$. Let $\alpha$ be a character of $T$. Restrict $\alpha$ to $B$ via the projection $B \rightarrow T$ to get a character $\alpha_B$ of $B$. Consider its induced character $$R(\alpha)=\Ind_B^G\alpha_B$$ on $G$. Now, let us focus on $\alpha_0=\epsilon^{(q-1)/2}$, which is the only non-trivial character whose square is equal to $1$. It turns out that the induced character $R(\alpha_0)$ is a sum of two irreducible characters which are equal on $p$-regular conjugacy classes. Moreover, these characters attain values in $\bbZ$ when $q$ is a square. In the statement below we use that as $\SL(2,q)$ is a normal subgroup of $\GL(2,q)$, we get an action of $\GL(2,q)$ on conjugacy classes of $\SL(2,q)$ and so also on characters.
\begin{prop}
\begin{enumerate}
    \item $R(\alpha_0)$ is a direct sum of two irreducible characters $R_+(\alpha_0)$ and $R_-(\alpha_0)$.
    \item The characters $R_+(\alpha_0)$ and $R_-(\alpha_0)$ are permuted by the action of $\GL(2,q)$. In particular, they have the same dimension.
    \item The characters of $R_{\pm}(\alpha_0)$ are summed up in the following table:
    \begin{table}[htb]
\centering
\caption{Values of the character $R_{\pm}(\alpha_0)$}\label{valuesofR_pm}
\begin{tabular}{|l|c|c|c|c|}
  \hline 
  & $\epsilon I_2$ & $\textbf{d}(a)$ & $\textbf{d'}(\ksi)$ & $\epsilon u_\tau$\\
  & $\epsilon \in \{\pm1\}$ & $a \in \mu_{q-1} \setminus \{\pm1\}$ & $\ksi \in \mu_{q+1}\setminus \{\pm1\}$ & $\epsilon \in \{\pm 1\}$,  $\tau \in \{\pm 1\}$\\
  \hline
 
 $R_\sigma(\alpha), \sigma \in \{\pm\}$ & $\frac{(q+1)\alpha_0(\epsilon)}{2}$ & $\alpha_0(a)$ & $0$ & $\alpha_0(\epsilon)\frac{1+\sigma \tau \sqrt{\alpha_0(-1)q}}{2}$ \\[1ex]
  \hline
\end{tabular}
\end{table}
\FloatBarrier
    
\end{enumerate}

\end{prop}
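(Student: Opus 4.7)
The plan is to combine Mackey's intertwining formula for the principal series $R(\alpha_0) = \Ind_B^G \alpha_{0,B}$ with direct character computations, following the standard principal-series analysis of $\SL(2,q)$.

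For (1), I would use the Bruhat decomposition $G = B \sqcup BsB$ to reduce Mackey's inner-product formula to
\[
\langle R(\alpha_0), R(\alpha_0)\rangle_G \;=\; \langle \alpha_{0,B}, \alpha_{0,B}\rangle_B + \langle \alpha_0, \alpha_0^s\rangle_T \;=\; 2,
\]
since $B\cap sBs^{-1} = T$ and $\alpha_0^s(t) = \alpha_0(t^{-1}) = \alpha_0(t)$ (as $\alpha_0$ has order $2$). This forces $R(\alpha_0) = R_+ \oplus R_-$ with two distinct irreducible summands. For (2), I would take any $g \in \GL(2,q)$ of nonsquare determinant, e.g.\ the diagonal matrix with entries $(c_0, 1)$. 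It normalizes $B$ and centralizes $T$, so $R(\alpha_0)^g = \Ind_{gBg^{-1}}^G \alpha_{0,B}^g = R(\alpha_0)$, hence $g$-conjugation permutes $\{R_+, R_-\}$. A direct matrix computation shows the same conjugation sends $u_+$ to $u_-$, swapping the $\SL$-classes of the unipotents. Once (3) verifies that $R_+$ and $R_-$ take different values on these classes, $g$ must exchange them, and the equal degrees $\dim R_\pm = (q+1)/2$ follow.

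For (3), the induced-character formula yields $R(\alpha_0) = R_+ + R_-$ directly: on $\epsilon I_2$ it equals $(q+1)\alpha_0(\epsilon)$; on $\bd(a)$ with $a \neq \pm 1$ it equals $\alpha_0(a) + \alpha_0(a^{-1}) = 2\alpha_0(a)$ (the cosets $1, s \in G/B$ both conjugate $\bd(a)$ into $B$); on $\bd'(\ksi)$ it equals $0$ (no eigenvalue in $\bbF_q$, hence no conjugate in $B$); and on $\epsilon u_\tau$ it equals $\alpha_0(\epsilon)$ (a nontrivial unipotent has a unique fixed line in $\PP^1(\bbF_q)$). On the $\GL(2,q)$-stable classes $\pm I_2, \bd(a), \bd'(\ksi)$, the two constituents must agree, giving the tabulated entries there. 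On the unipotent classes, the central-character relation $R_\sigma(\epsilon g) = \alpha_0(\epsilon) R_\sigma(g)$ reduces the problem to determining $R_\pm(u_\tau)$.

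The main obstacle is pinning down $R_+(u_\tau) - R_-(u_\tau)$, but four algebraic constraints suffice: (a) $R_+(u_\tau) + R_-(u_\tau) = 1$; (b) the $\GL$-swap gives $R_+(u_+) = R_-(u_-)$; (c) orthogonality $\|R_+ - R_-\|^2 = 2$ combined with vanishing of $R_+ - R_-$ off the unipotent classes (each of size $(q^2-1)/2$) forces $|R_+(u_\tau) - R_-(u_\tau)|^2 = q$; and (d) complex conjugation $\overline{R_\pm(u_+)} = R_\pm(u_+^{-1})$, where $u_+^{-1}$ is $\SL$-conjugate to $u_+$ iff $-1$ is a square in $\bbF_q$, i.e.\ iff $\alpha_0(-1) = 1$, and to $u_-$ otherwise. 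This system has a unique solution up to the labeling swap $R_+ \leftrightarrow R_-$, namely $R_+(u_\tau) = (1 + \tau\sqrt{\alpha_0(-1)q})/2$, matching the tabulated formula after multiplying by $\alpha_0(\epsilon)$.
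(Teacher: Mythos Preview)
Your argument is correct and is precisely the standard principal-series analysis of $\SL(2,q)$: Mackey's formula via the Bruhat decomposition to get $\langle R(\alpha_0),R(\alpha_0)\rangle=2$, the outer $\GL(2,q)$-action to relate $R_+$ and $R_-$, the induced-character formula on each class type, and the orthogonality/conjugation constraints to pin down the unipotent values up to the labeling ambiguity. The paper does not actually prove this proposition; it simply refers to \cite[\S 3, \S 5, Table~5.4]{bonna}, where exactly this computation is carried out. So your sketch is not a different route but rather an explicit reconstruction of the argument the paper outsources to Bonnaf\'e.
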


\begin{proof}
For the computations, see \cite[\S 3., \S 5.]{bonna}. The full character table of $\SL(2,q)$ can be found in \cite[Table 5.4]{bonna}).
\end{proof}

\begin{rmk}
To make sense of the table, one needs to fix a square root of $\alpha_0(-1)q$ in $\overline{\bbQ}_p$ in a correct way. It will not matter in our applications and can be safely ignored. We only mention that, as explained in \cite[\S 5.2.3]{bonna}, this is done by fixing a non-trivial linear character $\chi_+$ of $\bbF_q^+$ and setting
\begin{displaymath}
\sqrt{\alpha_0(-1)q} = \sum_{z \in \bbF_q^*} \alpha_0(z)\chi_+(z).
\end{displaymath}
\end{rmk}

Observe that $\alpha_0$ takes values in $\{\pm 1 \}$, so the characters $R_{\pm}(\alpha_0)$ take values in $\bbQ_p[\sqrt{\alpha_0(-1)q}]$. In fact, more is true.

\begin{prop}\label{fieldofdef}
The representations corresponding to $R_+(\alpha_0)$ and $R_-(\alpha_0)$ can be defined over $\bbQ_p[\sqrt{\alpha_0(-1)q}]$.
\end{prop}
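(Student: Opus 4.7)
The plan is a Galois-descent argument, starting from the fact that $V := \Ind_B^G \alpha_{0,B}$ (the model of $R(\alpha_0)$) is defined over $\bbQ$, hence over $L := \bbQ_p$, because $\alpha_0$ takes values in $\{\pm 1\}$. Set $d := \alpha_0(-1)q$ and $K := L[\sqrt{d}]$. The goal is to produce $K$-forms of the two summands $V_\pm$ of $V \otimes_L \overline{L}$ corresponding to the characters $R_\pm(\alpha_0)$.

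I would first compute the $L$-algebra $A := \End_{L[G]}(V)$. Since $V \otimes_L \overline{L} = V_+ \oplus V_-$ with non-isomorphic absolutely irreducible summands, one has $\End_{\overline{L}[G]}(V \otimes_L \overline{L}) \cong \overline{L} \times \overline{L}$, which by faithfully flat base change forces $A$ to be a $2$-dimensional commutative $L$-algebra. The key claim is that $A \cong K$ as an $L$-algebra, collapsing to $L \times L$ in the degenerate case $\sqrt{d} \in L$. To establish this I would run a semi-linear Galois descent: the absolute Galois group $\Gamma := \Gal(\overline{L}/L)$ acts on $\overline{L} \times \overline{L}$ both coordinate-wise on scalars and by permuting the two factors according to its action on $\{V_+, V_-\}$. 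From the character table, the characters of $V_\pm$ generate $K$ over $L$ and are interchanged by the non-trivial element $\sigma \in \Gal(K/L)$, so the permutation action of $\Gamma$ factors through $\Gal(K/L)$ and is a swap. The $\Gamma$-fixed points of $\overline{L} \times \overline{L}$ are then exactly $\{(a, \sigma(a)) : a \in K\}$, isomorphic to $K$ as an $L$-algebra.

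Once $A \cong K$ is in hand, base change yields
$$\End_{K[G]}(V \otimes_L K) \cong A \otimes_L K \cong K \otimes_L K \cong K \times K,$$
using that $K/L$ is \'etale. The pair of orthogonal idempotents on the right splits $V \otimes_L K = V'_+ \oplus V'_-$ as $K[G]$-modules, and the equality $\dim_K V'_\pm = (q+1)/2 = \dim_{\overline{L}} V_\pm$, together with compatibility of idempotents before and after extension of scalars, forces $V'_\pm \otimes_K \overline{L} \cong V_\pm$. Hence $V_\pm$ are defined over $K$, as required.

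The main obstacle is the descent identification $A \cong K$; everything downstream is formal. The subtle point I would track carefully is the combined scalar-plus-permutation semi-linear action of $\Gamma$ on $\overline{L} \times \overline{L}$: the whole argument hinges on the fact that $\Gamma$ permutes the two factors precisely through the quotient $\Gal(K/L)$, a fact that is directly read off from the character table (specifically, the $\pm\sqrt{\alpha_0(-1)q}$ appearing in the unipotent rows).
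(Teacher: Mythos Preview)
Your argument is correct, but it takes a genuinely different route from the paper's. Both proofs begin from the same observation that the induced representation $V$ with character $R(\alpha_0)$ is already defined over $\bbQ_p$ (since $\alpha_0$ is $\{\pm 1\}$-valued). From there, however, the paper proceeds by writing down the explicit primitive central idempotents
\[
e_\pm=\frac{(q+1)/2}{|G|}\sum_{g\in G}R_\pm(\alpha_0)(g^{-1})\,g\in KG,
\]
which lie in $KG$ simply because the character values lie in $K$; applying these idempotents to the $K$-model of $V$ immediately produces the $K$-forms $e_\pm V$ of $V_\pm$. No Galois descent or analysis of $\End_{L[G]}(V)$ is needed.

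Your approach instead identifies the endomorphism algebra $A=\End_{L[G]}(V)$ via semi-linear descent, showing $A\cong K$ (or $L\times L$ when $\sqrt{d}\in L$, which is in fact the case $q=p^2$ used later in the paper), and then splits $V\otimes_L K$ using the idempotents of $A\otimes_L K\cong K\times K$. This is more conceptual: it explains \emph{why} $K$ is exactly the splitting field for $V$ and would generalise cleanly to situations where one does not have an explicit character formula. The paper's argument, by contrast, is shorter and more elementary, trading the descent machinery for the concrete Artin--Wedderburn formula. In the end the two idempotent pairs coincide: your idempotents in $\End_{K[G]}(V\otimes_L K)$ are precisely the actions of the paper's $e_\pm$ on $V\otimes_L K$.
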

\begin{proof}
In general, these kind of problems can be approached using the so-called Schur index (see \cite[\S 10]{isaacs}). Let us give a direct proof. It was suggested to me by Ehud Meir as an answer to my question on MathOverflow (see \cite{MO}). Denote $K=\bbQ_p[\sqrt{\alpha_0(-1)q}]$ and $d=(q+1)/2$. Observe that the representation of $T$ attached to $\alpha_0$ can be defined over $\bbQ_p$. From this it follows easily that the representation $R(\alpha_0)$ can be defined over $\bbQ_p$ as well, and so also over $K$. Choose such a representation $V$ of $G$ over $K$ with character $R(\alpha_0)$. We will use \cite[Thm. 3.6.2]{webb}, which gives an explicit description of the primitive central idempotents appearing in the  Artin-Wedderburn theorem. It implies that $e_+=\frac{d}{|G|}\sum_{g \in G} R_+(\alpha_0)(g^{-1})g$ is an idempotent of $KG$. We define $e_-$ in an analogous way. Observe that $e_+V, e_-V \subset V$ are $KG$-submodules of $V$ and by taking algebraic closures, we see that $V=e_+V \oplus e_-V $. Indeed, $V\otimes \overline{K} \simeq V_+ \oplus V_-$, where $V_{\pm}$ correspond to $R_\pm(\alpha_0)$ and from \cite[Thm. 3.6.2]{webb} we know that $V_+$ is the unique simple $\overline{K}Ge_+$-module and so we see that $\overline{K}Ge_+$ annihilates $V_-$ (and symmetrically for $e_-$). Thus we see that $V\otimes \overline{K} = (e_+ V\otimes \overline{K})\oplus (e_- V\otimes \overline{K})$ from which we easily conclude the claim. The modules $e_\pm V$ have characters respectively $R_\pm(\alpha_0)$ and they are the desired $G$-modules defined over $K$.
\end{proof}
\begin{rmk}
We will see soon that reductions of $R_{\pm}(\alpha_0)$ with respect to any lattice are not simple. Otherwise, we could use the following result of Carayol and Serre:
\begin{thm}\label{carayol}
Let $H$ be a finite group and $A$ a ring in $\calC$. Let $\rho: H \rightarrow GL_n(A)$ be a representation whose reduction is absolutely irreducible. Assume that the values of its character lie in some $A_0 \subset A$ which is in $\calC$. Then there exists a representation $\rho_0:H \rightarrow GL_n(A_0)$ such that $\rho = \rho_0 \otimes_{A_0} A$.
\end{thm}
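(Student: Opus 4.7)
The plan is to produce the descent of $\rho$ to $A_0$ by isolating an $A_0$-form of the image of the group algebra inside $M_n(A)$, with the trace hypothesis providing the necessary control on coefficients.

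First, I would let $B \subseteq M_n(A)$ denote the $A_0$-subalgebra generated by $\rho(H)$. Its reduction $\bar{B} \subseteq M_n(k)$ contains $\bar\rho(H)$, and by absolute irreducibility of $\bar\rho$ together with Burnside's theorem, $\bar{B} = M_n(k)$. I would then pick $g_1, \dots, g_{n^2} \in H$ whose images $\bar\rho(g_i)$ form a $k$-basis of $M_n(k)$. Nakayama's lemma immediately upgrades $\rho(g_1), \dots, \rho(g_{n^2})$ to an $A$-basis of $M_n(A)$.

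The key step is to show that these same elements form an $A_0$-basis of $B$, so that $B \otimes_{A_0} A \cong M_n(A)$. Given $b \in B$, write $b = \sum_i a_i \rho(g_i)$ in $M_n(A)$ with $a_i \in A$ and pair against $\rho(g_j)$ via the trace:
\[
\tr(b\,\rho(g_j)) = \sum_i a_i \tr(\rho(g_i g_j)).
\]
By the character hypothesis, every entry of the Gram matrix $G = \bigl(\tr(\rho(g_i g_j))\bigr)_{ij}$ lies in $A_0$, and its reduction $\bar{G}$ is the Gram matrix of the trace pairing on $M_n(k)$ in the basis $\{\bar\rho(g_i)\}$, which is nondegenerate; hence $G$ is invertible over $A_0$. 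Since $b \in B$ implies $b\,\rho(g_j) \in B$, and traces of elements of $B$ lie in $A_0$ (they are $A_0$-linear combinations of the values $\tr(\rho(h))$), inverting $G$ forces each $a_i$ into $A_0$.

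Finally, to extract the descended representation, I would lift a primitive idempotent from $\bar{B} = M_n(k)$ to an idempotent $e \in B$ using Hensel's lemma (applicable because $A_0$ is complete local), and set $V_0 = Be$. Standard arguments then show that $V_0$ is a free $A_0$-module of rank $n$ and that left multiplication yields an isomorphism $B \cong \End_{A_0}(V_0) \cong M_n(A_0)$; composing $\rho$ (viewed as landing in $B$) with this gives a representation $\rho_0: H \to \GL_n(A_0)$ with $\rho_0 \otimes_{A_0} A$ conjugate to $\rho$ in $\GL_n(A)$. The most delicate point is the middle step, where the trace hypothesis is indispensable in forcing the coefficients of arbitrary elements of $B$ into $A_0$; the remaining arguments are standard structure theory of Azumaya algebras over a complete local ring with finite residue field, whose Brauer group is trivial.
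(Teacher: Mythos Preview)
Your argument is correct and is essentially the standard proof of this result due to Carayol and Serre. The paper itself does not give a proof; it merely refers the reader to \S 6 of Mazur's article in \cite{FLT}, where precisely this argument (Burnside plus the trace-pairing trick to force the coefficients into $A_0$, followed by idempotent lifting to split off a rank-$n$ free module) is carried out. So there is nothing to compare: you have supplied the details that the paper outsources.

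One minor remark on presentation: in your final sentence you conclude that $\rho_0 \otimes_{A_0} A$ is \emph{conjugate} to $\rho$ in $\GL_n(A)$, whereas the statement as written has an equality. This is harmless, since in this context the equality is understood up to isomorphism (and indeed the two isomorphisms $B \otimes_{A_0} A \cong M_n(A)$ differ by an inner automorphism, by Skolem--Noether); but if one wanted literal equality of matrix-valued functions one would simply absorb the conjugating matrix into the choice of $A_0$-basis of $V_0$ after noting that the conjugator can be taken in $\GL_n(A_0)\cdot(1+\mathfrak{m}_A M_n(A))$ --- though this refinement is not needed for the application in the paper.
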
 
\begin{proof}
See \S6 of Mazur's article in \cite{FLT} for a discussion and proof.
\end{proof}
\end{rmk}

\subsection{Modular representation theory of SL$(2,q)$}
We now proceed to the study of representations of $G$ in characteristic $p$ (i.e. "equal characteristic case"). Observe that there is a natural two-dimensional representation of $G=\SL(2,q)$ over $\bbF_q$, i.e. the one given by the inclusion $\SL_2(\bbF_q) \rightarrow \GL_2(\bbF_q)$. Let us denote it by $V$. We then also have higher exterior powers of this representation $\Lambda^nV=\Delta(n)$. These representations can be also seen as representations of $G$ on the set of homogenous polynomials $\oplus_{m=0}^n \bbF_qx^{n-m}y^m$ with the natural action of $G$ given by the formula: $g=\left( \begin{array}{cc}
a & b\\
 c & d
\end{array}\right)$ acts by $x^ky^l \mapsto (ax+cy)^k(bx+dy)^l$. Let by $I(n)$ denote the set of $\{m \in \bbN|m_i \leq n_i\}$, where $m_i$ and $n_i$ are coefficients in the $p$-adic expansions of the numbers $m$ and $n$ respectively, i.e. $m=\sum_i m_ip^i$ and similarly for $n$. It can be computed that $L(n)=\oplus_{m \in I(n)} \bbF_q x^{n-m}y^m$ is a submodule of $\Delta(n)$. More precisely, $L(n)$ is a submodule of $\Delta(n)$ generated by $x^n$ (see \cite[§ 10.1.2]{bonna}). It turns out that $L(n)_{0 \leq n  \leq q-1}$ furnish all the irreducible representations over $\bbF_q$.
\begin{thm}\label{lsindelta}
\begin{enumerate}
\item $(L(n) \otimes \bbbF_q)_{0 \leq n \leq q-1}$ is a set of representatives of the isomorphism
classes of simple $\bbbF_q G$-modules. It follows that $(L(n))_{0 \leq n \leq q-1}$  is a set of representatives of the isomorphism
classes of simple $\bbF_q G$-modules.
\item $\langle \Delta(n): L(m)\rangle_G =\mathbf{\Delta}_{m,n}$, where $\mathbf{\Delta}_{m,n}$ is a number determined in the following way:  define recursively
the set $\mathscr{E}(n)$ as follows:
\begin{displaymath}
\mathscr{E}(n) = \left\{ \begin{array}{ll}
\{0\} & \textrm{if $0 \leq n \leq p-1$}\\
p\mathscr{E}\big (\frac{n-n_0}{p} \big ) & \textrm{if $n \geq p$ and $n_0 = p-1$}\\
p\mathscr{E}\big (\frac{n-n_0}{p} \big ) \sqcup n_0 +1 + p\mathscr{E}\big (\frac{n-n_0-p}{p} \big ) & \textrm{if $n \geq p$ and $n_0 \leq p-2$}
\end{array} \right.
\end{displaymath}
Then $\mathbf{\Delta}_{ m,n} \in \{0,1\}$ and $\mathbf{\Delta}_{ m,n} = 1$ if and only if $m \in  n-2\mathscr{E}(n)$. Recall that $n_0$ denotes the first digit of the $p$-adic expansion of $n$. Here, $\langle \Delta(n): L(m)\rangle_G$ denotes the multiplicity of $L(m)$ as a factor in a Jordan-H\"older series of $\Delta(n)$.
\end{enumerate}

\end{thm}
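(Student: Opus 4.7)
The plan is to combine Steinberg's tensor product theorem with an inductive analysis of the symmetric power modules $\Delta(n)$. For part (1), I would first note that the number of simple $\bbbF_q G$-modules equals the number of $p$-regular conjugacy classes of $G=\SL(2,q)$. From the conjugacy class description recalled earlier, the central, split-torus and non-split-torus classes contribute $2 + (q-3)/2 + (q-1)/2 = q$ $p$-regular classes, while the four unipotent-times-central classes are not $p$-regular. Steinberg's tensor product theorem then asserts that, writing $n = n_0 + n_1 p + \cdots + n_{s-1}p^{s-1}$ with $0 \le n_i \le p-1$, the simple modules can be described as $L(n) = L(n_0) \otimes L(n_1)^{(p)} \otimes \cdots \otimes L(n_{s-1})^{(p^{s-1})}$, where $(p^i)$ denotes the $i$-th Frobenius twist and $L(n_i)$ for $0 \le n_i \le p-1$ is the $(n_i+1)$-dimensional symmetric power $\Delta(n_i)$, which is simple because its dimension is at most $p$. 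One then checks that this tensor product agrees with the explicit submodule $\bigoplus_{m \in I(n)} \bbF_q \, x^{n-m}y^m \subset \Delta(n)$ from the statement, since both have bases indexed by the indices $m$ with $m_i \le n_i$ in the $p$-adic expansion; the descent to $\bbF_q$ is immediate from this explicit construction.

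For part (2), I would argue by induction on $n$, the case $0 \le n \le p-1$ being trivial since then $\Delta(n) = L(n)$. For $n \ge p$, write $n = n_0 + pn'$ with $0 \le n_0 \le p-1$. If $n_0 = p-1$, the key step is a $G$-equivariant isomorphism $\Delta(n) \cong \Delta(p-1) \otimes \Delta(n')^{(p)}$, coming from writing each monomial $x^{n-m}y^m$ with $m = m_0 + pm'$ and using Lucas' congruence for binomial coefficients to check that the $\SL_2$-action respects the tensor factorization: the potential carries in the action of upper-triangular unipotents vanish $p$-adically precisely because $n_0 = p-1$. Since $\Delta(p-1) = L(p-1)$ is already simple, the composition factors of $\Delta(n)$ are $L(p-1) \otimes L(m')^{(p)}$ for $m' \in n' - 2\mathscr{E}(n')$, reproducing the first recursive branch $\mathscr{E}(n) = p\mathscr{E}(n')$.

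If $n_0 \le p-2$, the central input is a short exact sequence of $\bbF_q G$-modules
\[
0 \to L(p-2-n_0) \otimes \Delta(n'-1)^{(p)} \to \Delta(n) \to L(n_0) \otimes \Delta(n')^{(p)} \to 0,
\]
again obtained by a careful analysis of the $\SL_2$-action on homogeneous polynomials via Lucas' theorem: the quotient picks out the part indexed by $m$ whose leading digit $m_0$ satisfies $m_0 \le n_0$, while the kernel encodes the complementary carry correction. Applying the inductive hypothesis to the two outer terms and collecting the resulting highest weights reproduces the second branch $\mathscr{E}(n) = p\mathscr{E}(n') \sqcup (n_0+1+p\mathscr{E}((n-n_0-p)/p))$, once one observes that $L(n-2e)$ for $e = n_0+1+pe'$ equals $L(p-2-n_0) \otimes L(n'-1-2e')^{(p)}$. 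The principal obstacle is establishing this short exact sequence precisely, including the correct identification of sub- and quotient-module; this is essentially the Carter--Cline-type analysis of Weyl modules for $\SL_2$ in characteristic $p$ and is the non-trivial input. Once the sequence is in hand, the stated recursion follows formally.
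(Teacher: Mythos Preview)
The paper does not give its own proof of this theorem: its entire argument is the single line ``See \cite[Theorem 10.1.8.]{bonna}.'' Your proposal, by contrast, sketches an actual proof. The outline you give --- counting $p$-regular classes for part~(1), invoking Steinberg's tensor product theorem to identify the $L(n)$, and for part~(2) running an induction on $n$ via the isomorphism $\Delta(n)\cong L(p-1)\otimes\Delta(n')^{(p)}$ when $n_0=p-1$ and the short exact sequence
\[
0 \longrightarrow L(p-2-n_0)\otimes\Delta(n'-1)^{(p)} \longrightarrow \Delta(n) \longrightarrow L(n_0)\otimes\Delta(n')^{(p)} \longrightarrow 0
\]
when $n_0\le p-2$ --- is exactly the standard argument (it is essentially what one finds in Bonnaf\'e's Chapter~10). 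Your dimension counts and the recovery of the recursion for $\mathscr{E}(n)$ from the two outer terms are correct. So there is no discrepancy in approach: you have simply unpacked the cited reference, while the paper is content to invoke it. The only point you flag as a ``principal obstacle'' --- establishing the short exact sequence precisely --- is indeed the heart of the matter, and would need to be carried out carefully (via the explicit polynomial bases and Lucas' theorem, as you indicate) if the proof were to be self-contained; but since the paper itself does not attempt this, your sketch already goes further than the paper does.
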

\begin{proof}
See \cite[Theorem 10.1.8.]{bonna}.
\end{proof}

For a field $L$, let $R_L(G)$ denote the Grothendieck group of finitely generated $L[G]$-modules. For a field $K$ complete with respect to a discrete valuation and its residue field $k$, there is a ring homomorphism $d : R_K(G) \rightarrow R_k(G)$, coming from a choice of a stable lattice (but independent from the choice), see \cite[\S 15.]{serre}.
\begin{prop}
Let $K=\bbQ_p(\sqrt{\alpha_0(-1)q})$. We have
\begin{displaymath}
d(R_{\pm}(\alpha_0))=[\Delta((q-1)/2)]
\end{displaymath}
in $R_k(G)$.
\end{prop}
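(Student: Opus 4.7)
The plan is to verify the identity via Brauer character theory. Recall that, after enlarging $K$ and $k$ to a common splitting field for $G$ (which is harmless for the purposes of $R_K(G) \to R_k(G)$), the decomposition map $d$ is characterized by the property that the Brauer character of $d(V)$ equals the restriction of the ordinary character $\chi_V$ to the $p$-regular conjugacy classes of $G$. Since the Brauer character map is injective on $R_k(G)$, it suffices to compute the Brauer character of $[\Delta((q-1)/2)]$ and check that it matches the restriction of $R_\pm(\alpha_0)$ to the $p$-regular classes.

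The first step is to identify the $p$-regular classes of $G = \SL(2,q)$. Inspecting the list of conjugacy classes, these are exactly $\{\pm I_2\}$, those of $\bd(a)$ with $a \in \mu_{q-1} \setminus \{\pm 1\}$, and those of $\bd'(\xi)$ with $\xi \in \mu_{q+1} \setminus \{\pm 1\}$; the unipotent classes $\pm u_{\pm}$ are excluded since $u_\pm$ has order $p$. The key observation is that the characters $R_+(\alpha_0)$ and $R_-(\alpha_0)$ differ only on the unipotent classes, so their restrictions to the $p$-regular classes coincide, which is consistent with the statement.

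The main calculation is to compute the eigenvalues of each $p$-regular representative on $\Delta(n)$ with $n = (q-1)/2$, realized as homogeneous polynomials of degree $n$ in $x, y$. The element $\bd(a) = \mathrm{diag}(a, a^{-1})$ acts diagonally on the monomial basis with eigenvalues $a^{n-2m}$, while diagonalizing $\bd'(\xi)$ over $\bbbF_p$ yields eigenvalues $\xi, \xi^q = \xi^{-1}$ on $V$, and hence $\xi^{n-2m}$ on $\Delta(n)$. The Brauer character values are therefore geometric sums of the form $\sum_{m=0}^{n} \lambda^{n-2m}$. Using $a^{q-1} = 1$ the sum simplifies to
\[ \varphi_{\Delta(n)}(\bd(a)) = a^n \cdot \frac{1 - a^{-(q+1)}}{1 - a^{-2}} = a^n = \alpha_0(a) \]
for $a \in \mu_{q-1} \setminus \{\pm 1\}$, and to $\frac{q+1}{2}\alpha_0(\pm 1)$ for $a = \pm 1$. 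Using $\xi^{q+1} = 1$, the numerator $1 - \xi^{-(q+1)}$ vanishes, so $\varphi_{\Delta(n)}(\bd'(\xi)) = 0$ for $\xi \neq \pm 1$.

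These values are exactly the entries in the character table for $R_\pm(\alpha_0)$ on the $p$-regular classes, so the Brauer characters agree and by injectivity $d(R_\pm(\alpha_0)) = [\Delta((q-1)/2)]$ in $R_k(G)$. I do not anticipate any genuine obstacle; the geometric series evaluate cleanly on both the split and non-split tori. The only subtlety worth flagging is the consistent use of Teichmüller lifts to identify $\mu_{q^2-1} \subset \bbbF_p^\times$ with the corresponding roots of unity in $\overline{\bbQ}_p$, which makes the same formal sum compute the ordinary character value and the Brauer character value simultaneously.
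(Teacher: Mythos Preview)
Your Brauer character computation is correct. The paper itself does not prove this proposition: it simply cites \cite[Proposition 10.2.9]{bonna}. Your approach supplies a self-contained argument by directly computing the eigenvalues of the semisimple representatives $\bd(a)$ and $\bd'(\xi)$ on $\Delta(n)$ and evaluating the resulting geometric series; the relations $a^{q-1}=1$ and $\xi^{q+1}=1$ collapse these sums exactly to the tabulated values of $R_\pm(\alpha_0)$ on the $p$-regular classes, and injectivity of the Brauer character map finishes the job. This is more informative than a bare citation, and the only cost is length. The one point worth tightening is your remark that extending $K$ and $k$ to a common splitting field is ``harmless'': this is true because the scalar-extension maps on Grothendieck groups are injective, but it deserves a word of justification since the residue field of $K=\bbQ_p(\sqrt{\alpha_0(-1)q})$ need not already contain $\bbF_q$.
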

\begin{proof}
See \cite[Proposition 10.2.9]{bonna}.
\end{proof}

\begin{cor}\label{reductionsofR_pm}
Let $q=p^2$, so $G=\SL(2,p^2)$. Then 
\begin{displaymath}
d(R_{\pm}(\alpha_0))=[L((p^2-1)/2)]+[L((p+1)(p-3)/2)] \text{ in $R_k(G)$.}
\end{displaymath}
\end{cor}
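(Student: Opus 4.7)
The plan is to feed the answer into the machinery already available in the excerpt. By the preceding proposition, $d(R_{\pm}(\alpha_0))=[\Delta((q-1)/2)]=[\Delta((p^2-1)/2)]$ in $R_k(G)$, so the content of the corollary is purely a computation of the Jordan-H\"older constituents of $\Delta(n)$ for $n=(p^2-1)/2$ using Theorem \ref{lsindelta}(2). Concretely, I need to determine the set $n-2\mathscr{E}(n)$ and verify that its two elements are exactly $(p^2-1)/2$ and $(p+1)(p-3)/2$.

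First I would write down the $p$-adic expansion of $n$. Since $(p^2-1)/2=\tfrac{p-1}{2}\cdot p+\tfrac{p-1}{2}$, we have $n_0=n_1=(p-1)/2$ and $p\leq n<p^2$. For $p\geq 3$ odd, $n_0=(p-1)/2\leq p-2$, so the third branch of the recursion in Theorem \ref{lsindelta}(2) applies, giving
\begin{equation*}
\mathscr{E}(n)=p\,\mathscr{E}\!\left(\tfrac{p-1}{2}\right)\sqcup\Big(\tfrac{p+1}{2}+p\,\mathscr{E}\!\left(\tfrac{p-3}{2}\right)\Big).
\end{equation*}
Both $(p-1)/2$ and $(p-3)/2$ are strictly less than $p$, so the first branch of the recursion yields $\mathscr{E}((p-1)/2)=\mathscr{E}((p-3)/2)=\{0\}$. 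Therefore $\mathscr{E}(n)=\{0,(p+1)/2\}$.

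Translating this into $n-2\mathscr{E}(n)$ gives the two values $(p^2-1)/2$ and $(p^2-1)/2-(p+1)=(p^2-2p-3)/2=(p-3)(p+1)/2$. By Theorem \ref{lsindelta}(2), the corresponding multiplicities $\mathbf{\Delta}_{m,n}$ are both $1$ and all other $\mathbf{\Delta}_{m,n}$ vanish, which is exactly the claimed identity in $R_k(G)$.

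I do not expect a serious obstacle: the proof is a mechanical unwinding of the recursion once the right $p$-adic digits of $(p^2-1)/2$ are identified. The only point to be a touch careful about is the edge case $p=3$, where $(p-3)/2=0$ still falls under the first branch and the second simple constituent becomes $L(0)$, the trivial module; the formula $(p-3)(p+1)/2$ accommodates this correctly, so no separate argument is needed.
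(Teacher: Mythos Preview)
Your proof is correct and follows exactly the route the paper intends: the paper's own proof merely says ``This is a direct corollary of the last two results,'' and you have supplied precisely the computation with $\mathscr{E}((p^2-1)/2)$ that makes this corollary explicit. The edge case $p=3$ you flag is handled correctly.
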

\begin{proof}
This is a direct corollary of the last two results.
\end{proof}

\section{The main result}
We consider the ring $R=W(\bbF_{p^s})[\![X]\!](X^2-pX)$. The first observation is that $R$ is isomorphic to the fiber product $W(\bbF_{p^s}) \times_{\bbF_{p^s}} W(\bbF_{p^s})$, i.e. to the subring of $W(\bbF_{p^s})[\![X]\!](X^2-pX)\times W(\bbF_{p^s})[\![X]\!](X^2-pX)$ of pairs $(a,b)$ with the same reduction modulo $(p)$.
\begin{lemma}\label{isowithfiberprod}
We have an isomorphism
\begin{displaymath}
W(\bbF_{p^s})[\![X]\!]/(X^2-pX) \simeq W(\bbF_{p^s}) \times_{\bbF_{p^s}} W(\bbF_{p^s})
\end{displaymath}
\begin{proof}
Observe first that $W(\bbF_{p^s})[\![X]\!]/(X^2-pX) = W(\bbF_{p^s})[X]/(X^2-pX)$. We define a map from $R$ to $W(\bbF_{p^s}) \times_{\bbF_p} W(\bbF_{p^s})$ by $X \mapsto (0,p)$. We see readily that it is well defined (i.e. $X^2-pX$ maps to $0$) and that it is surjective. It is also injective, as in the kernel consists of exactly those polynomials which are divisible by $X$ and by $X-p$. We get the result as $W(\bbF_{p^s})[X]$ is factorial.
\end{proof}

\end{lemma}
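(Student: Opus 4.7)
The plan is to exhibit an explicit ring map in one direction and verify that it is an isomorphism. First I would reduce the question to a polynomial setting: since the relation $X^2 = pX$ allows us to rewrite $X^n = p^{n-1} X$ for every $n \geq 2$, any power series $\sum_{i \geq 0} a_i X^i$ becomes $a_0 + \bigl(\sum_{i \geq 1} a_i p^{i-1}\bigr) X$, and the inner sum converges because $W(\bbF_{p^s})$ is $p$-adically complete. Thus every element of $W(\bbF_{p^s})[\![X]\!]/(X^2-pX)$ is uniquely of the form $a+bX$ with $a, b \in W(\bbF_{p^s})$, and the quotient coincides with the polynomial ring quotient $W(\bbF_{p^s})[X]/(X^2-pX)$.

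Next I would define
\[
\phi \colon W(\bbF_{p^s})[X]/(X^2-pX) \longrightarrow W(\bbF_{p^s}) \times_{\bbF_{p^s}} W(\bbF_{p^s}), \quad X \longmapsto (0,p).
\]
Well-definedness requires two checks: the target pair must lie in the fiber product (both $0$ and $p$ reduce to $0$ in $\bbF_{p^s}$), and the relation must be respected ($(0,p)^2 = (0,p^2) = p \cdot (0,p)$). Both are immediate.

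To finish, I would verify bijectivity directly. For surjectivity, given any $(u,v)$ with $u \equiv v \pmod p$ I would write $v = u + pc$ with $c \in W(\bbF_{p^s})$ and observe $\phi(u+cX) = (u, u+pc) = (u,v)$. For injectivity, if $\phi(a+bX) = (a, a+pb) = (0,0)$ then $a=0$ and $pb = 0$, forcing $b=0$ since $p$ is a non-zero-divisor in the DVR $W(\bbF_{p^s})$. Alternatively, one can argue via factoriality: the kernel of the evaluation map $W(\bbF_{p^s})[X] \to W(\bbF_{p^s}) \times_{\bbF_{p^s}} W(\bbF_{p^s})$ sending $f \mapsto (f(0), f(p))$ consists of polynomials vanishing at both $0$ and $p$, which since $W(\bbF_{p^s})[X]$ is a UFD and $X$, $X-p$ are coprime irreducibles, is exactly the principal ideal $(X(X-p)) = (X^2-pX)$.

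This is essentially a routine verification; the only minor subtlety worth spelling out is the passage from power series to polynomials in the first paragraph, and I do not anticipate any substantive obstacle.
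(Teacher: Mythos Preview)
Your proof is correct and follows essentially the same route as the paper: reduce from power series to polynomials, define the same map $X \mapsto (0,p)$, and check bijectivity (the paper phrases injectivity via factoriality of $W(\bbF_{p^s})[X]$, which you also record as an alternative). Your version is simply more explicit, spelling out the reduction step and giving a direct element-wise argument for surjectivity and injectivity where the paper says ``readily''.
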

Our strategy is to find, for any $p$, a finite group $G$ and two representations of $G$ $\rho_1$ and $\rho_2$ over $W(\bbF_{p^s})$ whose reductions are the same (we denote them by $\brho$), satisfy the condition $\End_{\bk}(\bar{\rho})=\bk$ and whose traces evaluated at some conjugacy class is not divisible by $p^2$. This will imply that there exists a universal deformation ring of $\brho$ and that it admits two different maps to $W(\bbF_{p^s})$ which are equal modulo $p$. The condition involving traces will guarantee surjectivity of the map $R_{\brho} \rightarrow W(\bbF_{p^s}) \times_{\bbF_p} W(\bbF_{p^s})$.

We will need the following generalization of the so-called Ribet's lemma (see \cite[Prop. 2.1]{ribet}) from the $2$-dimensional case to higher dimensions. The proof is virtually the same but using block matrices. Recall that by $d$ we denoted the map $R_K(G) \rightarrow R_k(G)$.
\begin{prop}\label{Ribet'slemma}
Let $G$ be a finite group and let $\rho:  G \rightarrow \GL(V)$ be an irreducible $n$-dimensional representation over a $p$-adic field $K$. Assume $d([\rho])=[\phi_1]+[\phi_2] \in R_k(G)$ for two (not necessarily different) irreducible representations $\phi_1$ and $\phi_2$ of $G$ over $k$. Then there exists a $G$-stable $\calO_K$-lattice for which the reduction is of the form $\left( \begin{array}{cc}
\phi_1 & *\\
  & \phi_2
\end{array}\right)$ (this is a block notation, $\phi_i$ are now matrices that we obtain by fixing a lattice and $*$ is an unknown submatrix of a suitable dimension) and is \textbf{not} isomorphic to the direct sum of $\phi_1$ and $\phi_2$.
\end{prop}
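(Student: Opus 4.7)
The plan is to follow Ribet's argument for the two-dimensional case (\cite[Prop. 2.1]{ribet}) with scalar entries replaced by matrix blocks of sizes $d_1 = \dim \phi_1$ and $d_2 = \dim \phi_2$. Start by picking any $G$-stable $\calO_K$-lattice $L \subset V$; such a lattice exists because $G$ is finite. Its reduction $\bar L = L/\pi L$ has class $[\phi_1] + [\phi_2]$ in $R_k(G)$, so $\bar L$ has length two with Jordan--H\"older factors $\phi_1$ and $\phi_2$. Up to swapping the roles of $\phi_1$ and $\phi_2$, exactly one of three cases holds: (i) $\bar L$ is a non-split extension $0 \to \phi_1 \to \bar L \to \phi_2 \to 0$; (ii) $\bar L$ is a non-split extension $0 \to \phi_2 \to \bar L \to \phi_1 \to 0$; or (iii) $\bar L \cong \phi_1 \oplus \phi_2$.

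In case (i) one immediately wins: choose an $\calO_K$-basis of $L$ whose first $d_1$ vectors reduce to a basis of the $\phi_1$-submodule of $\bar L$; the matrix of $\rho$ in this basis is block upper triangular of the required shape, and non-split by assumption. In case (ii) I would apply the standard preimage trick: let $M \subset L$ denote the preimage in $L$ of the $\phi_2$-submodule of $\bar L$. Then $\pi L \subset M \subset L$, and the filtration $\pi M \subset \pi L \subset M$ has successive quotients $\pi L/\pi M \cong L/M \cong \phi_1$ and $M/\pi L \cong \phi_2$, yielding an exact sequence $0 \to \phi_1 \to \bar M \to \phi_2 \to 0$. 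So $M$ is in case (i) or case (iii), and it suffices to rule out case (iii) for every $G$-stable lattice.

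The main obstacle -- and the only place where the irreducibility of $\rho$ is genuinely used -- is ruling out case (iii). If some lattice $L$ were in case (iii), then $\End_{k[G]}(\bar L)$ would contain the non-trivial idempotent $\bar e$ projecting onto the $\phi_1$-summand. The ring $R := \End_{\calO_K[G]}(L)$ is a module-finite $\calO_K$-algebra, hence $\pi$-adically complete, and the two-sided ideal $\pi R$ lies in its Jacobson radical. Hensel's lemma for idempotents then lifts $\bar e$ to a non-trivial idempotent $\tilde e \in R$. But $R \otimes_{\calO_K} K = \End_{K[G]}(V)$ is a division algebra by Schur's lemma (since $V$ is irreducible over $K$), so it contains no non-trivial idempotents -- contradiction.

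Combining, no $G$-stable lattice is in case (iii), so every such lattice is in case (i) or (ii). If some lattice is in case (i), we are done; otherwise every lattice is in case (ii), and the preimage construction of the second paragraph produces a lattice in case (i) (not (iii), as that case is already ruled out), completing the proof. Beyond the Hensel lifting step, the block-matrix generalization is a routine translation of Ribet's original argument: wherever Ribet manipulates a scalar entry, we manipulate a $d_1 \times d_2$ block of the same $\pi$-adic shape.
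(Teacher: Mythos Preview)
Your argument for ruling out case~(iii) has a genuine error. You want to lift the projection idempotent $\bar e \in \End_{k[G]}(\bar L)$ to $R = \End_{\calO_K[G]}(L)$ via Hensel, but Hensel only lifts idempotents from $R/\pi R$, and you have silently assumed that the reduction map $R/\pi R \to \End_{k[G]}(\bar L)$ is surjective. It is not. Since $V$ is irreducible over $K$, Schur gives $R \otimes_{\calO_K} K = \End_{K[G]}(V)$, a division algebra; when $V$ is absolutely irreducible this is just $K$, forcing $R = \calO_K$ and $R/\pi R = k$, i.e.\ only the \emph{scalar} endomorphisms of $\bar L$ come from $R$. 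The projector onto the $\phi_1$-summand is not scalar, so it is simply not in $R/\pi R$ and there is nothing to lift. Worse, the conclusion you are aiming for is false: an irreducible $V$ \emph{can} have a stable lattice with semisimple reduction (for instance, whenever $\Ext^1_{k[G]}(\phi_1,\phi_2)=0$ while $\Ext^1_{k[G]}(\phi_2,\phi_1)\neq 0$, applying your own preimage trick to a non-split lattice produces one), so the strategy of excluding case~(iii) outright cannot succeed.

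This is exactly why the paper (and Ribet's original argument) does \emph{not} try to rule out case~(iii). Instead it assumes for contradiction that case~(i) never occurs---equivalently, that every lattice whose reduction is block upper-triangular with $\phi_1$ on top is split---and runs an iterative process: each split reduction lets one conjugate so that the upper-right block gains another factor of~$\pi$, producing a convergent sequence of base changes whose limit exhibits a proper $K[G]$-submodule of~$V$. Your preimage trick and block-triangular bookkeeping are fine, but the heart of the proof is this successive-approximation argument, not an idempotent lift.
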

\begin{proof}
We fix any $G$-stable $\calO_K$-lattice $V_0$ in $V$. We can assume that the reduction with respect to this lattice is of the form $\left( \begin{array}{cc}
\phi_1 & *\\
  & \phi_2
\end{array}\right)$ or $\left( \begin{array}{cc}
\phi_1 & \\
 * & \phi_2
\end{array}\right)$.
This is because the reduction has to contain as subrepresentation $\phi_1$ or $\phi_2$ and we can then choose a suitable basis of $V\otimes_{\calO_K} k$ to get one of those forms and lift it in any way to an $\calO_K$-basis ov $V_0$. We claim that we can choose an invariant lattice in such a way that we are in the first case (and will fix such a representation $\rho_0 : G \rightarrow \GL_n(\calO_K)$).  Indeed, an easy calculation shows that conjugating some matrix by $P=\left( \begin{array}{cc}
I & \\
  & \pi I
\end{array}\right)$ (where $\pi$ is a uniformizer of $\calO_K$) results in the same matrix but with the right top block divided by $\pi$ and the bottom-left block multiplied by $\pi$, i.e. $\left( \begin{array}{cc}
1 & \\
  & \pi I
\end{array}\right)\left( \begin{array}{cc}
A & B\\
 C & D
\end{array}\right)\left( \begin{array}{cc}
I & \\
  & \pi^{-1} I
\end{array}\right)=\left( \begin{array}{cc}
A & \pi^{-1}B\\
 \pi C & D
\end{array}\right)$. So in the second case we can conjugate $\rho$ by $P$ to get a lattice with a reduction of the first form as needed. Thus, we saw that we can assume that the bottom-left block of $\rho$ is divisible by $\pi$. We assume moreover that the reduction with respect to any lattice that is of the form $\left( \begin{array}{cc}
\phi_1 & * \\
  & \phi_2
\end{array}\right)$ is isomorphic to a direct product of $\phi_1$ and $\phi_2$ (we will say that it is \emph{split} further on in this proof) and aim at finding a contradiction. We now want to show inductively that there exists a converging sequence of matrices $B_i\in \GL_n(K)$ of the form $\left( \begin{array}{cc}
I & T_i\\
  & I
\end{array}\right)$ (the sizes of the blocks are the same as previously) such that conjugating the representation $\rho_0$ by $B_i\in \GL_n(K)$ we will get a representations consisting of matrices such that the bottom-left block is divisible by $\pi$ and the upper-right one is divisible by $\pi^i$. This will finish the proof as the matrices will converge to some matrix $B$ and conjugating the representation $\rho_0$ by this matrix will give a representation with the upper-right block equal to zero, which will contradict the irreducibility of $\rho$. We know that the claim is true for $i=0$. Assume it now for some $i$. We want to show it for $i+1$.

We have assumed that the reduction of $\rho$ attached to any lattice that is of the form $\left( \begin{array}{cc}
\phi_1 & *\\
  & \phi_2
\end{array}\right)$ is also split. This applies in particular to the representation $P^iB_i\rho_0 B_i^{-1}P^{-i}$, so we can choose a matrix of the form $Q_i=\left( \begin{array}{cc}
I & U_i\\
  & I
\end{array}\right) \in \GL_n(\calO_K)$ so that conjugating the  $P^iB_i\rho_0 B_i^{-1}P^{-i}$ by this matrix will have the reduction equal to $\left( \begin{array}{cc}
\phi_1 & \\
  & \phi_2
\end{array}\right)$. To see the last claim: if the reduction is  of the form $\left( \begin{array}{cc}
\phi_1 & *\\
  & \phi_2
\end{array}\right)$ and is split, then we know that the associated extension $0 \rightarrow \phi_1 \rightarrow \overline{P^iB_i\rho_0 B_i^{-1}P^{-i}} \rightarrow \phi_2 \rightarrow 0$ is split due to the fact that there exists some isomorphism $\overline{P^iB_i\rho_0 B_i^{-1}P^{-i}} \simeq \phi_1 \oplus \phi_2$ (as we assumed that $\brho$ is split) and the assumption of irreducibility of $\phi_i$'s. Indeed, we  use that a map between irreducible modules is either an isomorphism or the zero map: due to the splitting, both $\phi_1$ and $\phi_2$ are submodules of $\overline{P^iB_i\rho_0 B_i^{-1}P^{-i}}$ and one of them must map isomorphically on $\phi_2$ via the map $\overline{P^iB_i\rho_0 B_i^{-1}P^{-i}} \rightarrow \phi_2$ in the exact sequence. This gives the desired section which we denote by $s:\phi_2 \rightarrow \overline{P^iB_i\rho_0 B_i^{-1}P^{-i}}$. If the underlying vector space of the representation $\overline{P^iB_i\rho_0 B_i^{-1}P^{-i}}$ can be decomposed as $\bar{V}\simeq V_1 \oplus V_2$ (where $V_1$ corresponds to $\phi_1$), then the base change map $\bar{V}\simeq V_1 \oplus V_2 \rightarrow \bar{V}$, $(v_1,v_2) \mapsto v_1 + s\circ p(v_2)$ is given by conjugating by a matrix of the form $\bar{Q}_i=\left( \begin{array}{cc}
I & \bar{U}_i\\
  & I
\end{array}\right) \in \GL_n(k)$ as it is the identity on $V_1$ and $s\circ p (v_2) - v_2$ is in $V_1$ for $v_2 \in V_2$. This base change gives an isomorphism of $\bar{V}$ with a direct sum of $k[G]$-modules $V_1 \oplus s\circ p (\bar{V})$, so we have proved the last claim (by lifting $\bar{Q}_i$ to a matrix $Q_i\in \GL_n(\calO_K)$ of the form $\left( \begin{array}{cc}
I & U_i\\
  & I
\end{array}\right)$). This means that $Q_iP^iB_i\rho_0 B_i^{-1}P^{-i}Q_i^{-1}$ has the upper-right block divisible by $\pi$ and bottom-left block divisible by $\pi^{i+1}$, which shows that the matrix $B_{i+1}=P^{-i}Q_iP^iB_i$ is a candidate for the next matrix in the series. We compute
\begin{displaymath}
B_{i+1}=P^{-i}Q_iP^iB_i= \left( \begin{array}{cc}
I & \\
  & \pi^{-i}I
\end{array}\right)
\left( \begin{array}{cc}
I & U_i\\
  & I
\end{array}\right)
\left( \begin{array}{cc}
I & \\
  & \pi^i I
\end{array}\right)
\left( \begin{array}{cc}
I & T_i\\
  & I
\end{array}\right) =
\left( \begin{array}{cc}
I &T_i + \pi^i U_i\\
  & I
\end{array}\right)
\end{displaymath}
which shows that $B_{i+1}$ is of the desired form. Furthermore, it is clear that the matrices $B_i$ converge.
\end{proof}
We will use the following calculation of the $\Ext$ groups.
\begin{fact}\label{calcofExt}
We have
\begin{displaymath}
\Ext_{\bbbF_pG}(L((p^2-1)/2)\otimes \bbbF_p, L((p+1)(p-3)/2)\otimes \bbbF_p)=\bbbF_p
\end{displaymath}
\end{fact}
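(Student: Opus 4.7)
My plan is to compute $\Ext^1_{\bbbF_p G}(L((p^2-1)/2)\otimes \bbbF_p, L((p+1)(p-3)/2)\otimes \bbbF_p)$ for $G = \SL(2,\bbF_{p^2})$ by reducing, via Steinberg's tensor product theorem and self-duality of $\SL_2$-simples, to a sum of cohomology groups $H^1(G, L)$ for explicit simple $G$-modules $L$; these can then be analyzed using standard modular representation theory of $\SL_2$.

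First, I would apply Steinberg's tensor product theorem. Writing $(p^2-1)/2 = \tfrac{p-1}{2}(1+p)$ and $(p+1)(p-3)/2 = \tfrac{p-3}{2}(1+p)$ gives
\begin{displaymath}
L((p^2-1)/2) \otimes \bbbF_p \cong L(\tfrac{p-1}{2}) \otimes L(\tfrac{p-1}{2})^{[F]}, \qquad L((p+1)(p-3)/2) \otimes \bbbF_p \cong L(\tfrac{p-3}{2}) \otimes L(\tfrac{p-3}{2})^{[F]},
\end{displaymath}
where on the right $L(a)$ denotes the restricted simple $\bbbF_p G$-module of highest weight $a \in \{0,\ldots,p-1\}$ and $[F]$ is the Frobenius twist. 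Using the self-duality $L(a)^* \cong L(a)$ for $\SL_2$, the Ext group becomes $H^1(G, M \otimes M^{[F]})$ where $M := L(\tfrac{p-1}{2}) \otimes L(\tfrac{p-3}{2})$.

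Second, since both $(p-1)/2$ and $(p-3)/2$ lie in the first alcove and their sum $p-2$ is less than $p$, the classical Clebsch--Gordan rule applies in characteristic $p$ and gives $M = \bigoplus_{i=0}^{(p-3)/2} L(p-2-2i)$. A further application of Steinberg's theorem decomposes $M \otimes M^{[F]}$ as a direct sum of simples $L(c + pd)$ for pairs $(c,d)$ of odd integers in $[1, p-2]$, whence the desired Ext equals $\bigoplus_{c, d \in \{1,3,\ldots,p-2\}} H^1(G, L(c + pd))$.

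Third, I would identify the nonzero summands. A lower bound of $1$ comes for free from Ribet's lemma (Proposition \ref{Ribet'slemma}) applied to $R_{\pm}(\alpha_0)$: by Corollary \ref{reductionsofR_pm}, their reductions produce non-split extensions with exactly the required Jordan--H\"older content. For the upper bound, one uses standard calculations of $H^1(G, L)$ for simple $G$-modules in defining characteristic (in the spirit of Cline--Parshall--Scott, Pillen or Sin) to conclude that exactly one of the pairs $(c, d)$ above yields a one-dimensional cohomology group while all others vanish. The main obstacle is precisely this last step: identifying the unique summand that contributes requires either a direct computation using the Bruhat decomposition of $G$, or an invocation of the Lyndon--Hochschild--Serre spectral sequence comparing the algebraic group $\SL_2$, its first Frobenius kernel, and the finite group $G = \SL_2(\bbF_{p^2})$.
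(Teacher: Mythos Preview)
The paper's proof is a single-line citation of \cite[Corollary~4.5]{modulesofSL2} (Andersen--J{\o}rgensen--Landrock), which computes $\Ext^1$ between simple $\bbbF_p\SL(2,p^n)$-modules directly. Your approach is genuinely different in that it attempts an explicit computation: the reductions via Steinberg's tensor product theorem, self-duality, and Clebsch--Gordan are all correct, and the lower bound via Proposition~\ref{Ribet'slemma} and Corollary~\ref{reductionsofR_pm} is a nice observation that is logically independent of the use made of this Fact later.

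However, there is a genuine gap, which you yourself identify: the upper bound is not proved. You reduce to showing that among all $H^1(G,L(c+pd))$ with $c,d$ odd in $[1,p-2]$ exactly one is one-dimensional and the rest vanish, and then stop, only naming possible techniques (Cline--Parshall--Scott, Frobenius-kernel spectral sequences). That step is the entire content of the statement; the preceding reductions are formal. Completing it would in practice require precisely the results of \cite{modulesofSL2} or of Cline--Parshall--Scott on $H^1$ of $\SL_2(\bbF_{p^n})$ with coefficients in simples, so your detour does not avoid the literature the paper invokes --- it merely postpones the citation to a later stage after unpacking the modules. As written, the proposal is an outline whose decisive step is missing.
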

\begin{proof}
This follows immediately from \cite[Corollary 4.5]{modulesofSL2}
\end{proof}

Recall the following fact on the $\Ext$ groups over noncommutative rings. For a ring $R$, let $Z(R)$ denote its center. 
\begin{lemma}
Suppose that $R$ is a ring and $S \subset  R$ its subring contained in $Z(R)$. Then the groups $\Hom_R(A,B)$ and the $\Ext^i_R(A,B)$ are actually $S$-modules. If $\mu : A \rarr A$ and $\nu : B \rarr B$ are multiplication by $s \in S$, so are the induced endomorphisms $\mu^*$ and $\nu^*$ of $\Ext^i_R(A,B)$ for all $i$.
\end{lemma}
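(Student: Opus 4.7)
The plan is a standard projective resolution argument whose entire content is concentrated in the fact that elements of $Z(R)$ act as $R$-module endomorphisms on every $R$-module. The core observation is: for any $s \in S \subseteq Z(R)$ and any $R$-module $M$, the map $s \cdot (-) : M \to M$ is $R$-linear, since $s \cdot (rm) = (sr)m = (rs)m = r(sm)$. Everything else follows from bookkeeping.

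First I would handle the $i=0$ case directly: given $f \in \Hom_R(A,B)$ and $s \in S$, put $(sf)(a) := s \cdot f(a)$. The $R$-linearity of $sf$ uses centrality of $s$ in exactly the above way, and the module axioms are immediate. Moreover, by $R$-linearity of $f$, one has $(sf)(a) = f(sa)$ as well, which is already a microcosm of the second statement: the action coming from $\nu$ (postcomposition) agrees with the action coming from $\mu$ (precomposition).

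Next, to pass to $\Ext^i$, I would choose a projective resolution $P_\bullet \to A$ and equip each $\Hom_R(P_i, B)$ with the $S$-module structure of the previous paragraph. Since this structure is functorial in $P_i$, it commutes with the differentials of the complex $\Hom_R(P_\bullet, B)$, and hence descends to $H^i(\Hom_R(P_\bullet, B)) = \Ext^i_R(A,B)$. Independence of the resolution is automatic: the standard comparison theorem produces $R$-linear chain maps and homotopies, which are \emph{a fortiori} $S$-linear because $S \subseteq Z(R)$.

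Finally, for the equality $\mu^* = \nu^*$, I would lift $\mu = s \cdot \id_A$ to the chain map $\widetilde{\mu} := s \cdot \id_{P_\bullet} : P_\bullet \to P_\bullet$. This is a valid chain map by centrality, and it covers $\mu$ by construction. Applying $\Hom_R(-,B)$, the induced endomorphism of the complex $\Hom_R(P_\bullet, B)$ sends $f$ to $f \circ \widetilde{\mu} = s \cdot f$ (by $R$-linearity of $f$), which is exactly $\nu \circ f$. Thus precomposition with $\widetilde{\mu}$ and postcomposition with $\nu$ yield the identical cochain endomorphism, so they induce the identical map on cohomology. There is no genuine obstacle here; the only point to get right is consistently invoking centrality so that every map manipulated (multiplication by $s$ on $A$, on $P_i$, on $B$) is genuinely $R$-linear, which is immediate from $S \subseteq Z(R)$.
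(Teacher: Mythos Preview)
Your argument is correct and is precisely the standard projective-resolution argument underlying \cite[Lemma 3.3.6]{weibel}, which is exactly what the paper invokes; the paper simply cites that lemma and notes that the same proof works verbatim once one observes that central elements act as $R$-linear endomorphisms. You have written out in full the ``obvious modifications'' the paper leaves implicit, so there is no substantive difference in approach.
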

\begin{proof}
This is a slight generalization of \cite[Lemma 3.3.6]{weibel}, which is stated for commutative rings $R$. The proof is the same up to obvious modifications.
\end{proof}

\begin{lemma}\label{extandbasechange}
Let $R$ be a (not necessarily commutative) noetherian ring and $k \subset Z(R)$ be a field. Let $M$, $N$ be two finitely generated $R$-modules and $k' \supset k$ a field extension. Then
\begin{displaymath}
\Ext_{R \otimes_k k'}(M\otimes_k k',N\otimes_k k')\simeq \Ext_R(M,N)\otimes_kk'
\end{displaymath}
\end{lemma}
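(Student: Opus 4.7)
The plan is to run the standard base change argument for Ext, taking care that $R$ may be noncommutative. The one ingredient specific to this setting is that $k \subset Z(R)$ ensures $R \otimes_k k'$ is a well-defined ring (since $k'$ is commutative and $k$ is central), and every left $R$-module automatically becomes a left $R \otimes_k k'$-module after tensoring with $k'$.

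First I would choose, since $R$ is noetherian and $M$ is finitely generated, a resolution $P_\bullet \rarr M$ by finitely generated free left $R$-modules. Applying $- \otimes_k k'$ and using that $k'$ is flat over $k$ (as a field extension), one obtains an exact sequence $P_\bullet \otimes_k k' \rarr M \otimes_k k'$. Since each $P_i \cong R^{n_i}$ is finitely generated free, $P_i \otimes_k k' \cong (R \otimes_k k')^{n_i}$ is a finitely generated free $R \otimes_k k'$-module; thus $P_\bullet \otimes_k k'$ is a projective resolution of $M \otimes_k k'$ over $R \otimes_k k'$, and $\Ext_{R \otimes_k k'}^{\bullet}(M \otimes_k k', N \otimes_k k')$ can be computed as the cohomology of $\Hom_{R \otimes_k k'}(P_\bullet \otimes_k k', N \otimes_k k')$.

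Next I would identify, for each $i$,
\[
\Hom_{R \otimes_k k'}(P_i \otimes_k k', N \otimes_k k') \;\cong\; \Hom_R(P_i, N) \otimes_k k'.
\]
For $P_i = R^{n_i}$ finitely generated free this reduces to
$\Hom_{R \otimes_k k'}((R \otimes_k k')^{n_i}, N \otimes_k k') = (N \otimes_k k')^{n_i} = N^{n_i} \otimes_k k' = \Hom_R(R^{n_i}, N) \otimes_k k'$, and one checks the identification is functorial in the resolution. This gives a chain-level isomorphism of complexes $\Hom_{R \otimes_k k'}(P_\bullet \otimes_k k', N \otimes_k k') \cong \Hom_R(P_\bullet, N) \otimes_k k'$.

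Finally, taking cohomology and using once more that $k'$ is flat over $k$ (so that $H^i(C^\bullet \otimes_k k') \cong H^i(C^\bullet) \otimes_k k'$ for any complex $C^\bullet$ of $k$-vector spaces), I conclude
\[
\Ext_{R \otimes_k k'}^i(M \otimes_k k', N \otimes_k k') \cong \Ext_R^i(M, N) \otimes_k k'.
\]
There is no real obstacle here; the only place where finite generation of $M$ is used is in step one, to guarantee the $P_i$'s can be taken finitely generated free so that the $\Hom$-tensor identification in step three is immediate. Noncommutativity of $R$ costs nothing because $k$ lies in the center, so left/right module structures on the relevant tensor products are unambiguous.
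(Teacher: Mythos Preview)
Your argument is correct and is essentially the same as the paper's: the paper simply refers to \cite[Lemma 3.3.8, Prop. 3.3.10]{weibel}, and the standard base-change computation you wrote out (free resolution by finitely generated modules, flatness of $k'$ over $k$, and the Hom-tensor identification for finitely generated free modules) is exactly that argument adapted to the noncommutative setting with $k\subset Z(R)$.
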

\begin{proof}
\begin{comment}
Choose a resolution $F_\bullet \rightarrow M$ by finitely generated free $R$-modules. Then $F_\bullet \otimes_k k' \rightarrow M \otimes_k k'$ is a resolution of $M \otimes_k k'$ by finitely generated free $R \otimes_k k'$-modules. Because $-\otimes_k k'$ is an exact functor from $R$-modules to $R \otimes_k k'$-modules,
\begin{align*}
\Ext^i_R(M,N)\otimes_k k' = H^i(\Hom_R(F_\bullet,N))\otimes_k k' \simeq H^i(\Hom_R(F_\bullet,N)\otimes_k k')\\
\simeq H^i(\Hom_{R \otimes_k k'}(F_\bullet \otimes_k k',N\otimes_k k'))= \Ext^i_{R \otimes_k k'}(M\otimes_k k',N\otimes_k k')
\end{align*}
(compare with \cite[Lemma 3.3.8]{weibel} and \cite[Prop. 3.3.10]{weibel}).
\end{comment}
The proof is virtually the same as the proofs of \cite[Lemma 3.3.8]{weibel} and \cite[Prop. 3.3.10]{weibel}.
\end{proof}

\begin{lemma}\label{extsareiso}
Let $R$ be a (not necessarily commutative) ring and $k \subset Z(R)$ be a field. Let $M_1$, $M_2$ be two irreducible $R$-modules. Suppose that $\Ext_R(M_2,M_1)=k$ and let $W$ and $W'$ be modules representing some non-trivial classes in $\Ext_R(M_2,M_1)$. Then $W\simeq W'$ as $R$-modules.
\end{lemma}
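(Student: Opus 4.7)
\medskip

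The plan is to exploit the standard connection between the $k$-module structure on $\Ext^1_R(M_2, M_1)$ and isomorphism classes of extensions.

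First I would recall that since $M_1$ is irreducible, Schur's lemma says $\End_R(M_1)$ is a division ring, and because $k \subset Z(R)$ acts on $M_1$ through $R$-endomorphisms (namely by scalar multiplication), we have a ring embedding $k \hookrightarrow \End_R(M_1)$. In particular, for every $c \in k^\times$, multiplication by $c$ is an $R$-module automorphism of $M_1$, which I will denote $\mu_c : M_1 \to M_1$.

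Next I would apply the previous lemma (about $S$-module structure on $\Ext$ groups) with $S = k$: it tells us that the endomorphism of $\Ext^1_R(M_2, M_1)$ induced by $\mu_c$ on the first argument is precisely multiplication by $c$ in the $k$-module structure. Since by hypothesis $\Ext^1_R(M_2, M_1) \simeq k$, the action of $k^\times$ on the set of nonzero extension classes is transitive. So if $W$ and $W'$ correspond to nonzero classes $\lambda, \lambda' \in k$, there is $c \in k^\times$ such that $(\mu_c)_\ast[W] = [W']$.

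Finally, the definition of pushforward along $\mu_c$ means precisely that we have a commutative diagram of extensions
\begin{equation*}
\begin{tikzcd}
0 \ar[r] & M_1 \ar[r] \ar[d, "\mu_c"] & W \ar[r] \ar[d, "\varphi"] & M_2 \ar[r] \ar[d, equal] & 0 \\
0 \ar[r] & M_1 \ar[r] & W' \ar[r] & M_2 \ar[r] & 0
\end{tikzcd}
\end{equation*}
with exact rows; by the five lemma (since $\mu_c$ and $\id_{M_2}$ are isomorphisms) the middle map $\varphi$ is an $R$-module isomorphism, giving $W \simeq W'$.

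I do not expect any real obstacle here: the only subtle point is verifying that the action of $k^\times$ on $\Ext^1_R(M_2, M_1)$ via automorphisms of $M_1$ coincides with the intrinsic $k$-module structure, but this is exactly the content of the lemma invoked just before. Everything else is Schur and the five lemma.
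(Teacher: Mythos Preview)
Your proof is correct and follows essentially the same route as the paper: both use the preceding lemma to identify the $k^\times$-action on $\Ext^1_R(M_2,M_1)$ via automorphisms of $M_1$ with scalar multiplication, then use transitivity on nonzero classes. The only cosmetic difference is that the paper observes directly that rescaling the inclusion $i$ by $c^{-1}$ produces a representative of $c\cdot\xi$ with the \emph{same} middle term $W$, whereas you phrase this via the pushforward diagram and the five lemma; these are two ways of saying the same thing. (Your invocation of Schur's lemma is harmless but not really needed: all you use is that multiplication by $c\in k^\times$ is an $R$-automorphism of $M_1$, which follows from $k\subset Z(R)$.)
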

\begin{proof}
Write $W$ as an extension $0 \rightarrow M_1 \stackrel{i}{\rightarrow} W \stackrel{p}{\rightarrow} M_2 \rightarrow 0$. Because of the assumption $\Ext(M_2,M_1)=k$, all the classes of different non-trivial extensions are represented by $0 \rightarrow M_1 \stackrel{c \cdot i}{\rightarrow} W \stackrel{p}{\rightarrow} M_2 \rightarrow 0$, where $c\in k^*$. Indeed, we know that the map $M_1 \stackrel{\cdot c}{\rightarrow} M_1$ induces an analogous map on $\Ext(M_2,M_1)$. On the other hand, via identification of the "Yoneda Ext" with the "abstract Ext" (see  \cite[Theorem 3.4.3]{weibel}), one checks that the map induced by this multiplication corresponds to the following map of extensions
$$(\ksi : 0 \rightarrow M_1 \stackrel{i}{\rightarrow} W \stackrel{p}{\rightarrow} M_2 \rightarrow 0) \mapsto (c \cdot \ksi: 0 \rightarrow M_1 \stackrel{c^{-1} \cdot i}{\rightarrow} W \stackrel{p}{\rightarrow} M_2 \rightarrow 0) .$$
Although different as extensions, the corresponding $R$-modules are all isomorphic (to $W$). So is $W'$, which finishes the proof.
\end{proof}

The last results will turn out to be useful for proving the existence of the universal deformation rings for some representations when we couple them with the following result.
\begin{lemma}\label{end=k}
Assume that $(\brho,\bV)$ is a representation of some group $H$ over a field $k$ such that $\bV^{ss}$ is a direct sum of two different absolutely irreducible modules $V_1$ and $V_2$, but $\bV$ is not semisimple itself. Then $\End_{kH}(\brho)=k$.
\end{lemma}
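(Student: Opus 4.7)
The plan is to exploit the fact that $\bV$ fits into a nonsplit extension with $V_1$ and $V_2$ as composition factors, and then use Schur's lemma twice (once for $V_1$ and once for $V_2$), leveraging absolute irreducibility to identify endomorphism rings with $k$.

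First, since $\bV$ is not semisimple but $\bV^{\rm ss}=V_1\oplus V_2$, the module $\bV$ has exactly one nonzero proper submodule, which is one of the $V_i$'s. I would relabel so that $V_1\subset \bV$ is that submodule; then $\bV/V_1\cong V_2$ and we have a nonsplit short exact sequence $0\to V_1\to \bV \to V_2\to 0$. Given $\phi\in \End_{kH}(\bV)$, I would then show that $\phi$ preserves $V_1$: the composition $V_1\hookrightarrow \bV\xrightarrow{\phi}\bV\twoheadrightarrow V_2$ is an $H$-map between non-isomorphic irreducibles, hence zero by Schur's lemma, so $\phi(V_1)\subseteq V_1$.

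Consequently $\phi$ restricts to an endomorphism of $V_1$ and induces an endomorphism of $V_2$. Since both $V_i$ are absolutely irreducible, Schur's lemma gives scalars $\lambda,\mu\in k$ with $\phi|_{V_1}=\lambda\cdot\Id$ and $\bar\phi_{V_2}=\mu\cdot\Id$. The heart of the argument is to show $\lambda=\mu$: consider $\psi:=\phi-\lambda\cdot\Id$. By construction $\psi|_{V_1}=0$, and the induced map on $V_2$ equals $(\mu-\lambda)\cdot\Id$. If $\mu\neq\lambda$, then $(\mu-\lambda)^{-1}\psi$ yields an $H$-equivariant section of $\bV\twoheadrightarrow V_2$, contradicting the nonsplitness of the extension. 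Hence $\lambda=\mu$.

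It remains to conclude $\psi=0$. Since $\psi$ vanishes on $V_1$ and induces the zero map on $V_2=\bV/V_1$, its image lies in $V_1$ and it factors through $V_2$, giving an $H$-equivariant map $V_2\to V_1$. By Schur's lemma and the hypothesis $V_1\not\cong V_2$, this map is zero, so $\phi=\lambda\cdot\Id$, proving $\End_{kH}(\brho)=k$. The only mildly delicate point is the step where one uses nonsplitness to force $\lambda=\mu$; the rest is a routine double application of Schur's lemma, which is why absolute irreducibility of each $V_i$ (rather than mere irreducibility) is essential.
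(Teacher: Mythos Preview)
Your proof is correct and follows essentially the same strategy as the paper's: reduce to the case $V_1\subset\bV$, show any $\phi$ preserves $V_1$, use absolute irreducibility to extract a scalar, and then exploit nonsplitness to finish. The organization differs slightly: the paper first proves that every nonzero $\phi\in\End_{kH}(\bV)$ is an automorphism (by arguing that a kernel isomorphic to $V_1$ or $V_2$ would force a splitting), and only then subtracts the scalar $\lambda$ and invokes this dichotomy to conclude $\phi-\lambda\Id=0$; you instead track the two scalars $\lambda,\mu$ on the composition factors directly and use nonsplitness to force $\lambda=\mu$. Your route is a bit more streamlined and makes the role of the second composition factor's absolute irreducibility explicit, whereas the paper only uses Schur on $V_1$ and relies on the automorphism dichotomy for the rest.
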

\begin{proof}
Observe first, that $\bV$ contains one of the $V_i$ as a subrepresentation. By a change of notation we can assume that it is $V_1$ (and we will be denoting this copy by $V_1$ when there is no risk of confusion). In this case $V_2$ is isomorphic to the quotient of $V$ by $V_1$. Let $\phi$ be in $\End_{kH}(\bV) \setminus \{0\}$. We first show that it is an isomorphism. Assume the contrary. Let us look at the kernel of $\phi$. It is either isomorphic to $0$, $V_1$ or $V_2$. If it is isomorphic to $V_2$, then it must intersect $V_1$ trivially (as these are two different irreducible modules). So it maps isomorphically on the quotient by $V_1$, so provides a section, which contradicts the non-semisimplicity of $\bV$. If the kernel is isomorphic to $V_1$, then we look at the image $\mathrm{im} \phi \simeq V_2$ and do the same reasoning. So we know that $\phi$ is an isomorphism. Similarly, we check that $\phi$ maps $V_1$ into $V_1$ (using the fact that $\phi(V_1)\cap V_1$ is either trivial or equal to $V_1$ and that there are no non-trivial maps from $V_1$ to $V_2$). As $V_1$ is absolutely irreducible, the restriction $\phi|_{V_1}$ is equal to $\lambda \Id$ for some scalar $\lambda$. We can now look at $\phi - \lambda \Id$. It is an element of $\End_{kH}(\bV)$, so as we have just seen it is either zero or an isomorphism. As it has a non-trivial kernel, it must be equal to $0$. This finishes the proof.
\end{proof}

Finally, we are ready to prove the main result. Recall that the category $\hcalC$ and the classes $\frakU$ and $\frakQ$ depend on the field $k$.

\begin{thm}\label{main}
Let $k=\bbF_{p^s}$ for even $s$. Then for any (odd) $p$, the ring $W(k)[\![X]\!]/(X^2-pX)$ is in $\frakQ$.
\end{thm}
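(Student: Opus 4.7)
The plan is to exhibit, for $G = \SL(2,p^2)$, two $W(k)$-deformations $\rho_1, \rho_2$ of a common residual representation $\brho$ inducing distinct maps $R_{\brho} \to W(k)$ whose traces at some conjugacy class differ by an element of $pW(k)^\times$. Combining these into a single $W(k)$-algebra homomorphism $\Phi\colon R_{\brho} \to W(k) \times_k W(k)$ and using Lemma \ref{isowithfiberprod} to identify the target with $W(k)[\![X]\!]/(X^2-pX)$, the valuation condition then forces $\Phi$ to be surjective, exhibiting the desired ring as a quotient of $R_{\brho}$.

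For $\rho_1, \rho_2$ I would take representations carrying the characters $R_+(\alpha_0)$ and $R_-(\alpha_0)$. By Proposition \ref{fieldofdef} they are realized over $K = \bbQ_p[\sqrt{\alpha_0(-1)p^2}]$, which for odd $p$ lies inside $W(\bbF_{p^2})[1/p] \subseteq W(k)[1/p]$ (since $\sqrt{-1}$ lifts to $W(\bbF_{p^2})$ by Hensel's lemma as $-1$ is a square in $\bbF_{p^2}$); averaging any lattice over the finite group $G$ produces $W(k)$-valued lifts. From Table \ref{valuesofR_pm} the two characters differ at the unipotent element $u_+$ by exactly $\sqrt{\alpha_0(-1)p^2} = p\sqrt{\alpha_0(-1)} \in pW(k)^\times$, which is the valuation I need for surjectivity.

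To match the reductions, Corollary \ref{reductionsofR_pm} says any reduction of $\rho_i$ has class $[L((p^2-1)/2)] + [L((p+1)(p-3)/2)]$ in $R_k(G)$, with two distinct composition factors that are absolutely irreducible by Theorem \ref{lsindelta}. Combining Fact \ref{calcofExt} with Lemma \ref{extandbasechange} shows that $\Ext^1_{kG}$ between these simples is one-dimensional over $k$. Apply Proposition \ref{Ribet'slemma} to each $\rho_i$ (after arranging labels so the Ext direction matches) to obtain a $G$-stable lattice whose reduction is a non-split extension of the two simples; Lemma \ref{extsareiso} then asserts that all such non-split extensions are isomorphic as $kG$-modules, so after replacing one lattice by its image under such an isomorphism we may assume $\bar{\rho}_1 = \brho = \bar{\rho}_2$. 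Since $\brho$ is a non-split extension of two distinct absolutely irreducible modules, Lemma \ref{end=k} gives $\End_{kG}(\brho) = k$.

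Because $G$ is finite, Theorem \ref{representabilty} now produces $R_{\brho}$, and $\rho_1, \rho_2$ furnish the $W(k)$-algebra map $\Phi$. Its image contains $(1,1)$ and, via the trace of the universal representation evaluated at $u_+$, also $(\tr \rho_1(u_+), \tr \rho_2(u_+))$; subtracting $\tr \rho_2(u_+)\cdot (1,1)$ leaves $(p\sqrt{\alpha_0(-1)}, 0)$, whose first coordinate is a unit multiple of $p$. Together with $(1,1)$ this spans $W(k) \times_k W(k)$ over $W(k)$, so $\Phi$ is surjective. The main obstacle is the reduction-matching step: producing $\bar{\rho}_1 \cong \bar{\rho}_2$ on the nose requires both the generalized Ribet's lemma (to rule out the split reduction) and one-dimensionality of the relevant $\Ext^1$ (to pin down the unique non-split module up to isomorphism); without either input the two lifts would generically yield non-isomorphic reductions and no single $R_{\brho}$ could govern them both.
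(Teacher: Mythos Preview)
Your proposal is correct and follows essentially the same route as the paper: construct two $W(k)$-lifts of a common $\brho$ from the characters $R_\pm(\alpha_0)$ of $\SL(2,p^2)$, use the generalized Ribet lemma together with the one-dimensionality of the relevant $\Ext^1$ to force the reductions to coincide as a non-split extension, invoke Lemma~\ref{end=k} and Theorem~\ref{representabilty} for representability, and then check surjectivity of $R_{\brho}\to W(k)\times_k W(k)$ via the trace at $u_+$. The only cosmetic difference is that the paper observes directly that $\alpha_0(-1)=1$ (since $4\mid p^2-1$), so $\sqrt{\alpha_0(-1)p^2}=p$ and everything already lives in $\bbQ_p$; your detour through $\sqrt{-1}\in W(\bbF_{p^2})$ is unnecessary here but harmless.
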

\begin{proof}
Let $G=\SL(2,p^2)$, i.e. $q=p^2$. Assume $p \neq 2$ (see Lemma \ref{case-p=2} below for the proof in this case). Denote by $K$ the fraction field of $W(k)$. Let $\rho_+', \rho_-' : G \rightarrow \GL_{(q+1)/2}(K)$ be two representations of $G$ corresponding to the characters $R_+(\alpha_0)$ and $R_-(\alpha_0)$ and defined over $K$. It is possible to find such representations by Proposition \ref{fieldofdef}, as in this case these two representations are defined over $\bbQ_p$. In fact, $\alpha_0(-1)=1$ since $4|q-1$.

Denote by $d$ the reduction map $R_K(G) \rightarrow R_k(G)$. We know that $d(\rho_{\pm})=[L((p^2-1)/2)]+[L((p+1)(p-3)/2)]$ and that $L((p^2-1)/2)$, $L((p+1)(p-3)/2)$ are absolutely irreducible and defined over $\bbF_q$ (Theorem \ref{lsindelta}). Thus, the generalization of Ribet's lemma applies here (Proposition \ref{Ribet'slemma}) and we get two representations $\rho_+', \rho_-' : G \rightarrow \GL_{(q+1)/2}(W(k))$ whose reductions give non-trivial elements of $\Ext_{kG}(L((p^2-1)/2),L((p+1)(p-3)/2))$. By Fact \ref{calcofExt} and Lemma \ref{extandbasechange}, we have $\Ext_{kG}(L((p^2-1)/2),L((p+1)(p-3)/2))=k$. By Lemma \ref{extsareiso}, we see that the reductions $\brho_+$ and $\brho_-$ are isomorphic. So, by choosing a different $W(k)$-base for $\rho_+$, we might assume that the reductions are equal (the characters of $\rho_+$ and $\rho_-$ remained unchanged). Denote this common reduction by $\brho$. By Lemma \ref{end=k} and Theorem \ref{representabilty}, we see that the universal deformation ring $R_{\brho}$ for $\brho$ exists. As $\brho$ lifts in two ways ($\rho_+$ and $\rho_-$) to $W(k)$, from the universal property of the deformation rings we get two morphisms $r_+,r_- : R_{\brho} \rightarrow W(k)$ with the same reduction modulo the maximal ideal. So we get a morphism $r : R_{\brho} \rightarrow W(k)\times_k W(k) \simeq W(k)[\![X]\!]/(X^2-pX)$. We claim that $r$ is surjective. As all rings in $\hcalC$ are $W(k)$-algebras and morphisms in $\hcalC$ are $W(k)$-algebra morphisms, we see that the image of $r$ in $W(k)\times_k W(k)$ contains a $W(k)$-submodule $\{(w,w)|w \in W(k)\}$. So, we see that it is enough to check that the image contains an element of the form $(w,w-p)$ for some $w \in W(k)$. Let $\rho: G \rightarrow \GL_{(q+1)/2}(R_{\brho})$ be the universal lift of $\brho$ to $R_{\brho}$. Then $\rho_+=\tilde{r}_+\circ \rho$ and $\rho_-=\tilde{r}_-\circ \rho$, where $\tilde{r}_\pm: \GL_{(q+1)/2}(R_{\brho}) \rightarrow \GL_{(q+1)/2}(W(k))$ denotes the morphism that applies $r_\pm$ to every coefficient of a matrix in $\GL_{(q+1)/2}(R_{\brho})$. So $\Tr(\rho_\pm)=r_\pm(\Tr(\rho))$ as maps on (conjugacy classes of) $G$. But from Table \ref{valuesofR_pm}, we see that $\Tr(\rho_+(u_+))-\Tr(\rho_-(u_+))=\alpha_0(1)\frac{1+\sqrt{\alpha_0(-1)p^2}-(1-\sqrt{\alpha_0(-1)p^2})}{2}=\frac{1+p-(1-p)}{2}=p$. Denote $t=\Tr(\rho(u_+)) \in R_{\brho}$. Thus, $r_+(t)-r_-(t)=p$, which shows that $(r,r-p)$ lies in the image of $R_{\brho} \rightarrow W(k)\times_k W(k)$ for $r=r_+(t)$ and finishes the proof.
\end{proof}
 
Let us deal with the case $p=2$. In this case, we get more and with less effort.
 
\begin{lemma}\label{case-p=2}
Let $s>0$ be an integer. The ring {$W(\bbF_{2^s})[\![X]\!]/(X^2-2X)$} is actually in $\frakU$. 
\end{lemma}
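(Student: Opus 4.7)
The plan is to realize the ring $W(k)[\![X]\!]/(X^2-2X)$ directly as the universal deformation ring of the trivial one-dimensional representation of the cyclic group of order two over $k = \bbF_{2^s}$. The key point is that in characteristic $2$ the scalars $1$ and $-1$ coincide, so the one-dimensional order-two lifts of the trivial character split into two families (trivial and sign) that become equal only after reduction mod $2$, and this is precisely what the ring $W(k)[\![X]\!]/(X^2-2X)\simeq W(k)\times_k W(k)$ parametrizes.

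First I would set $G = C_2 = \{1,g\}$ and take $\brho: G \to \GL_1(k) = k^\times$ to be the trivial representation. Since $\brho$ is one-dimensional, $\End_k(\brho) = k$ automatically and strict equivalence classes of lifts are singletons (conjugation by $1\times 1$ scalar matrices is trivial). Because $G$ is finite, the first hypothesis of Theorem \ref{representabilty} is vacuous, so $R_{\brho}$ exists. Next I would describe $\Def_{\brho}(A)$ explicitly for $A \in \hcalC$: a lift is determined by the image $\lambda$ of $g$, which must satisfy $\lambda^2 = 1$ and $\lambda \equiv 1 \pmod{\frakm_A}$. Writing $\lambda = 1 + x$ with $x \in \frakm_A$, the condition $\lambda^2 = 1$ becomes $x^2 + 2x = 0$, so
\[
\Def_{\brho}(A) \;=\; \{\, x \in \frakm_A \mid x^2 + 2x = 0 \,\}.
\]

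Now I would identify the representing object. The ring $R_0 := W(k)[\![X]\!]/(X^2 + 2X)$ lies in $\hcalC$ (for instance by the same argument as in Lemma \ref{isowithfiberprod}, giving $R_0 \simeq W(k)\times_k W(k)$, which is finite over $W(k)$ hence noetherian and complete, with residue field $k$). A local $W(k)$-algebra morphism $R_0 \to A$ is exactly the choice of an element $x = X \cdot 1 \in \frakm_A$ satisfying $x^2 + 2x = 0$, and this assignment is functorial in $A$. Hence $h_{R_0} \simeq \Def_{\brho}$ as functors, so $R_{\brho} \simeq R_0$. Finally, the substitution $X \mapsto -X$ gives an isomorphism in $\hcalC$
\[
W(k)[\![X]\!]/(X^2 + 2X) \;\xrightarrow{\;\sim\;}\; W(k)[\![X]\!]/(X^2 - 2X),
\]
showing that $W(k)[\![X]\!]/(X^2 - 2X)$ is the universal deformation ring of $\brho$, and therefore lies in $\frakU$ for every $s > 0$.

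There is no real obstacle here: the construction bypasses all the delicate issues from the odd-$p$ case (non-simple reductions, Ribet's lemma, $\Ext$ computations) because in characteristic $2$ the two non-isomorphic lifts $g\mapsto 1$ and $g\mapsto -1$ of the trivial character already live inside $W(k)^\times$ itself, so the extra work needed to separate the two lifts in the odd-$p$ setting simply disappears.
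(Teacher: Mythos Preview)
Your proof is correct and follows essentially the same approach as the paper: both take the trivial one-dimensional representation of $C_2$ over $\bbF_{2^s}$ and identify its deformation functor with $A \mapsto \{a\in A^* \mid a^2=1\}$, represented by $W(k)[\![X]\!]/(X^2-2X)$. The only cosmetic difference is that the paper passes through the group algebra $W(k)[\bbZ/2\bbZ]$ via $X-1\mapsto\sigma$, whereas you substitute $\lambda=1+x$ to land in $W(k)[\![X]\!]/(X^2+2X)$ and then apply $X\mapsto -X$.
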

\begin{proof}
This is in fact a part of a more general formula for universal deformations rings of one dimensional representations that is mentioned for example in \cite[Ch. 2.2]{rainone}. Let $\bbZ/2\bbZ$ act trivially on $\bbF_{2^s}$. In fact, this is the only possible action, as $2 \nmid |\bbF_{2^s}^*|$. Then the universal deformation ring of this representation is equal to  $W(\bbF_{2^s})[\![X]\!]/(X^2-2X)$.
 To see it, observe that $W(\bbF_{2^s})[\![X]\!]/(X^2-2X)$ is isomorphic to $W(\bbF_{2^s})[\bbZ/2\bbZ]$ by mapping $X-1 \mapsto \sigma$, where $\sigma$ is the generator of $\bbZ/2\bbZ$. Now, $ W(\bbF_{2^s})[\bbZ/2\bbZ]$  is the universal deformation ring we are looking for, because for any ring $A  \in \hcalC$, to give a representation $\bbZ/2\bbZ \rightarrow \GL_1(A)=A^*$ is the same as to choose an element of $a \in A^*$ that satisfies $a^2=1$. For any such $A$, the set $\{a \in A^*| a^2=1\}$ is clearly parametrized by maps from $W(\bbF_{2^s})[\bbZ/2\bbZ]$ to $A$.
\end{proof}

\begin{rmk} Looking for examples of groups with a pair of different representations that would allow for a similar proof is not easy. Our proof would be simplified if the reductions of $R_{\pm}(\alpha_0)$ were simple. It is hard to construct examples of such groups and representations defined of $\bbQ_p$, as such a group cannot be $p$-solvable. This can be concluded from the main theorem of \cite{Schmid}.

A simpler result in this spirit is the following: this phenomenon is not possible if $p \nmid |G|$ (where $p$ is the characteristic of the residue field we work with). See \cite[Proposition 43]{serre}. An elementary proof can be found in \cite[Lemma 6.10]{dorobisz}.

\end{rmk}

\section*{Acknowledgments}
This work was a part of my Master thesis. I am grateful to my supervisor Dr Jakub Byszewski for introducing me into the topic and many helpful discussions.

%\bibliographystyle{alpha}
%\bibliography{document}

\begin{thebibliography}{9}
\bibitem{modulesofSL2}
H. H. Andersen, J. Jørgensen, P. Landrock,
\emph{The projective indecomposable modules of $\SL(2,p^n)$},
Proceedings of the London Mathematical Society (1983)

\bibitem{bonna}
C. Bonnaf\'e,
\emph{Representations
of $\SL_2(\bbF_q)$},
Springer,
2011
\bibitem{FLT}
G. Cornell, H. Silverman, G. Stevens (editors)
\emph{Modular Forms and Fermat's Last Theorem},
Springer-Verlag New York, 1997

\bibitem{dorobisz}
K. Dorobisz,
\emph{Inverse problems for universal deformation rings of group representations},
Doctoral Thesis, Leiden University, 2015

\bibitem{dorobisz2}
K. Dorobisz,
\emph{Inverse problems for universal deformation rings of group representations},
Transactions of the American Mathematical Society 368 (2016), no. 12, 8597–8613

\bibitem{EaM}
T. Eardley, J. Manoharmayum,
\emph{The inverse deformation problem},
Compositio Mathematica 152 (2016), no. 8, 1725–1739.

\bibitem{isaacs}
M. Isaacs,
\emph{Character Theory of Finite Groups},
Academic Press, 1976
   
\bibitem{matsumura}
H. Matsumura, \emph{Commutative ring theory}. Translated from the Japanese by M. Reid. Cambridge Studies in Advanced Mathematics, 8. Cambridge University Press, Cambridge, 1986. {\rm xiv}+320 pp. ISBN: 0-521-25916-9

\bibitem{MO}
E. Meir (http://mathoverflow.net/users/41644/ehud-meir),
\emph{Fields of definition of parabolically induced representations of $\mathrm{SL}(2,q)$},
URL (version: 2016-09-27): \url{http://mathoverflow.net/q/250825}

\bibitem{rainone}
  R. Rainone,
  \emph{On the inverse problem for deformation rings of representations},
Master thesis at Leiden University, 2010

\bibitem{ribet}
K. Ribet, \emph{A modular construction of unramified $p$-extensions of $\bbQ(\mu_p)$}, Invent. Math. 34, 151-162,  1976

\bibitem{Schmid}
  P. Schmid,
  \emph{Lifting Modular  Representations  of $p$-Solvable  Groups},
Journal of Algebra  83, 461-470  (1983)

 \bibitem{serre}
  J.-P. Serre,
  \emph{Linear representations of finite groups.},
  Springer-Verlag, 1977

\bibitem{dSL}
B. de Smit, H. W. Lenstra Jr.,
\emph{Explicit construction of universal deformation
rings}, Modular forms and Fermat’s last theorem (Boston, MA, 1995), Springer, New York,
1997, pp. 313–326

\bibitem{webb}
P. Webb,
\emph{A Course in Finite Group Representation Theory},
Cambridge Studies in Advanced Mathematics, 161. Cambridge University Press, Cambridge, 2016

\bibitem{weibel}
C. Weibel,
\emph{An introduction to homological algebra},
Cambridge University Press, 1994





 

\end{thebibliography}

\end{document}